\documentclass[10pt]{amsart}
\headheight=7pt
\textheight=574pt
\textwidth=400pt
\topmargin=14pt
\oddsidemargin=18pt
\evensidemargin=18pt
\usepackage{amsfonts,amsmath,amssymb,amsthm,latexsym,verbatim}
\newtheorem{theorem}{Theorem}[section]
\newtheorem{lemma}[theorem]{Lemma}

\newtheorem{corollary}[theorem]{Corollary}
\theoremstyle{remark}
\newtheorem{remark}[theorem]{Remark}
\theoremstyle{definition}
\newtheorem{definition}[theorem]{Definition}
\newtheorem{example}[theorem]{Example}

\DeclareMathOperator{\coker}{{\mathrm{coker}}}

\DeclareMathOperator{\Hom}{{\mathrm{Hom}}}

\newcommand{\abs}[1]{|#1|}	
\newcommand{\Fq}{{\mathbb {F}_q}} 
\newcommand{\Fstar}{{\mathbb {F}_q^\times}} 
 
\newcommand{\Z}{{\mathbb {Z}}} 
\newcommand{\Q}{{\mathbb {Q}}}

\DeclareMathOperator{\diag}{{\mathrm {diag}}} 	
\DeclareMathOperator{\rank}{{\mathrm {rank}}} 	
\DeclareMathOperator{\Paley}{{\mathrm {Paley}}} 	
\newcommand{\allone}{{\mathbf 1}} 	

\begin{document}
\title[]{The Critical groups of the Peisert graphs $P^*(q)$}

\author{Peter Sin}
\address{Department of Mathematics\\University of Florida\\ P. O. Box 118105\\ Gainesville FL 32611\\ USA}
\date{}
\thanks{This work was partially supported by a grant from the Simons Foundation (\#204181 to Peter Sin)}
\thanks{Part of this work was done during a visit to the Institute for Mathematical Sciences, Natonal University of Singapore as part of the program ``New Directions in Combinatorics.}

\begin{abstract}
The critical group of a finite graph is an abelian group defined by the
Smith normal form of the Laplacian. We determine the 
the critical groups of the Peisert graphs, a certain
family of strongly regular graphs similar to, but different from,
the Paley graphs. It is further shown that
the adjacency matrices of the two graphs defined over a field of order 
$p^2$ with $p\equiv 3\pmod 4$ are similar over the $\ell$-local integers
for every prime $\ell$. Consequently, each such pair of graphs
provides an example where all the corresponding generalized
adjacency matrices are both cospectral and equivalent in the sense of Smith normal form.
\end{abstract}

\maketitle
\section{Introduction} 
Let $\Gamma=(V,E)$ be a finite, simple, undirected and connected graph and let $A$ be the adjacency matrix of $\Gamma$  with respect to some fixed but arbitrary ordering of the vertex set $V$ of $\Gamma$. Let $D$ be the diagonal matrix whose $(i,i)$-entry is the degree of the $i^{\rm th}$ vertex. Then $L=D-A$ is called the {\it Laplacian matrix} of $\Gamma$. The matrices $A$ and $L$ represent endomorphisms (which will also be denoted by $A$ and $L$)  of the free abelian group on $V$. The structure of their cokernels as abelian groups is independent of the above ordering and can be found by
computing the Smith normal forms of the matrices. The cokernel of $A$ is called the {\it Smith group}.
The endomorphism $L$ maps the sum of all vertices to zero, so its cokernel is not a torsion group. The torsion subgroup $K(\Gamma)$ of the cokernel  of $L$ is called the {\it critical group} of $\Gamma$. 
It is known by Kirchhoff's matrix-tree theorem that the order of $K(\Gamma)$ is equal to the number of spanning trees of $\Gamma$.

One source of motivation for the study of the critical group came from physics \cite{DH},
where it was called the {\it sandpile group}.
In graph theory an early author on the critical group was 
Vince \cite{Vince}, who computed  them for wheels and complete bipartite graphs, 
and pointed out that the critical group depends only on the cycle matroid of the graph.
Other papers containing calculations of critical groups for families of graphs include 
Bai \cite{Bai}, Jacobson \cite{Jacobson}, Jacobson-Niedermaier-Reiner \cite{Jacobson-Niedermaier-Reiner}, Ducey-Jalil \cite{Ducey-Jalil} and  Chandler-Sin-Xiang \cite{CSXPaley}. Lorenzini \cite{Lorenzini} has examined the proportion of graphs with 
cyclic critical groups among graphs having critical groups of a particular  order,
while Wood \cite{Wood} has determined the distribution of the critical groups of the Erd\"os-R\'enyi random graphs.

The object of the present paper is to add one more family to the class of computed examples, by applying some of the ideas used for Paley graphs in \cite{CSXPaley} to the Peisert graphs. We shall obtain a complete description of the group structure of the critical groups of the Peisert graphs. However, unlike in \cite{CSXPaley}, we are not able to obtain a neat description of the generating function for the multiplicities of elementary divisors, so in this sense
the results are less satisfactory.

In the final section we study more closely the Peisert graphs and Paley graphs defined over the field of $p^2$ elements. Suppose $A$ is the adjacency matrix of a graph on $n$ vertices and  let $I$ denote the $n\times n$ identity matrix and $J$ the $n\times n$ matrix
whose entries are all equal to 1. Then the 
{\it generalized adjacency matrices} are the matrices
$aA+bI+cJ$ for integers $a$, $b$ and $c$. Among the generalized 
adjacency matrices are the Seidel $(-1,0,1)$ adjacency matrix,
the adjacency matrix of the complementary graph and, in the  case
of a regular graph, the Laplacian and signless Laplacian matrices, 
We show that, when $q=p^2$,  each generalized adjacency matrix of the Peisert graph 
is cospectral with, and has the same Smith normal form as, the corresponding
generalized adjacency matrix of the Paley graph. These properties are derived from the
stronger property that the adjacency matrices are similar by
an invertible matrix over a  ring of algebraic integers. 

\section{Definitions and notation}
\subsection{The Peisert graphs}
Here, we describe the family of graphs $P^*(q)$ constructed in \cite{Peisert}.
Let $q=p^{2t}$, for a prime $p$ with $p\equiv3\pmod 4$, and $t$ a positive integer. 
Let $\beta$ be a primitive element in $\Fq$. In the multiplicative group $\Fstar$, the subgroup $C_0$  of nonzero $4$-th powers has index $4$. Let $C_1$ be the coset $\beta C_0$ and let
$S'=C_0\cup C_1$.
The graph  $P^*(q)$ has vertex set $\Fq$, with two vertices $x$ and $y$  joined by an edge if and only if $x-y\in S'$. As observed in \cite{Peisert}
the isomorphism type of $P^*(q)$ does not depend on the choice of $\beta$.
The graphs $P^*(q)$ and the Paley graphs $\Paley(q)$  (\cite[p.101]{BH})  
are both Cayley graphs on an elementary
abelian group of order $q$ and are cospectral, but not isomorphic except when $q=9$ (\cite[\S6]{Peisert}). 
 
Thus, the graphs $P^*(q)$ form an infinite family of self-complementary
strongly regular graphs (also known as {\it conference graphs}) of non-Paley type.
There are many ways to construct graphs with the same parameters 
that are Cayley graphs on the same group. (See 
\cite{Mullin}, \cite{WQWX}.) The aim of this paper is to compute certain matrix invariants of the graphs $P^*(q)$, in particular their {\it critical groups}.

\subsection{The Smith group and the critical group} 
Let $R$ be a principal ideal domain. Then the matrix
version of the fundamental theorem on finitely generated $R$-modules says that
every $m\times n$ matrix $X$ over $R$ is {\it $R$-equivalent} to its
{\it Smith normal form}. That is to say, there exist an $m\times m$ matrix $P$ 
and an $n\times n$ matrix $Q$, both invertible over $R$, such that
$PXQ=D$, where 
\begin{equation*}
D=\begin{bmatrix} D_1 &0\\0&0
\end{bmatrix}
\end{equation*}
with $D_1=\diag(s_1,s_2,\ldots,s_r)$,  $s_1\mid s_2\mid\cdots\mid s_r$,
and  $r=\rank X$.
If we consider the $R$-module homomorphism $\mu_X:R^n\to R^m$ given by
left multiplication by $X$, then the Smith normal form
describes the decomposition of $\coker(\mu_X)$ , called the {\it Smith group } of $X$, into cyclic $R$-submodules. 
Sometimes, it is convenient to drop the divisibility requirement
and work with other ``diagonal forms'' of $X$, which also determine the Smith group.

Given a finite graph $\Gamma=(V,E)$, with $V$ ordered in 
some way, two important integer matrices are the adjacency matrix
$A$ and the Laplacian matrix $L$. If we take $R=\Z$ then, as already stated in the Introduction,
the {\it Smith group of $\Gamma$} is defined to be the Smith group of $A$
and the {\it critical group of $\Gamma$} is defined to be
the torsion subgroup of the  Smith group  of $L$ . We shall denote
the critical group of $\Gamma$ by $K(\Gamma)$.

\section{The Smith group and the $p'$-torsion of the critical group of $P^*(q)$}\label{eval}
$P^*(q)$ is a  strongly regular graph, cospectral with $\Paley(q)$.
The eigenvalues of its adjacencey matrix are $k=\frac{q-1}{2}$,
$r=\frac{-1+\sqrt{q}}{2}$ and $s=\frac{-1-\sqrt{q}}{2}$, with multiplicities $1$, $\frac{q-1}{2}$ and $\frac{q-1}{2}$, respectively. (See, for example, [8.1.1]\cite{BH}). 
Since the order of the critical group is determined by the spectrum
we have $\abs{K(P^*(q))} =\abs{K(\Paley(q))}$. 
In \cite[\S2] {CSXPaley} it was shown that the isomorphism type of the Smith group
$S(\Paley(q))$ and that of the $p$-complementary part $K(\Paley(q))_{p'}$ of
the critical group could also be determined from the spectrum and the 
property of being a Cayley graph on an elementary abelian group of order $q$.
The same argument applies to the $p$-complementary part of the Smith group
of all the matrices $A+cI$, where $A$ is the adjacency matrix and $c$ is an integer. 
In particular, for $P^*(q)$, or indeed any cospectral Cayley graph on 
an elementary abelian group of order $q$, these groups are isomorphic to 
the corresponding groups for $\Paley(q)$. Thus, we have the following results.

\begin{theorem} The Smith group of $P^*(q)$ is isomorphic to
$\Z/2r\Z\oplus(\Z/r\Z)^{2r}$, where $r=\frac{q-1}{4}$.
\end{theorem}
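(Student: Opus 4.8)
The plan is to compute the Smith normal form of $A$ one prime at a time, exploiting the fact that $P^*(q)$ is a Cayley graph on the additive group $G=(\Fq,+)$, an elementary abelian $p$-group of order $q=p^{2t}$. First I would record that $A$ is nonsingular: since $A$ is the adjacency matrix of a strongly regular graph with the stated spectrum, $\det A = k\cdot\bigl(\tfrac{m-1}{2}\bigr)^{2r}\bigl(-\tfrac{m+1}{2}\bigr)^{2r}$, where $m=\sqrt q=p^t$ and $r=\tfrac{q-1}{4}$, and this equals $\pm\,2r^{2r+1}$. As $r$ divides $q-1$, the order $2r^{2r+1}$ of the Smith group is coprime to $p$. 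Hence the Smith group is a finite $p'$-group, and it suffices to determine $\coker(A)\otimes\Z_\ell$ for each prime $\ell\neq p$.

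For such an $\ell$, the matrix $A$ is the image in the (left) regular representation of the group-algebra element $a=\sum_{x\in S'}x\in\Z[G]$. Because $\ell\neq p$, the algebra $\Z_\ell[G]$ is \'etale over $\Z_\ell$: it is a finite product $\prod_i\mathcal O_i$ of rings of integers of unramified extensions of $\Q_\ell$, the factors being indexed by the orbits of the Frobenius $\chi\mapsto\chi^\ell$ on the character group $\widehat G$. On the factor $\mathcal O_i$ belonging to the orbit of $\chi$, the element $a$ acts as the scalar $\lambda_\chi=\sum_{x\in S'}\chi(x)$, which is an eigenvalue of $A$. The crucial point is that here every $\lambda_\chi$ is a \emph{rational} integer, because $\sqrt q=m$ is an integer and the only eigenvalues are $k$, $\tfrac{m-1}{2}$ and $-\tfrac{m+1}{2}$; equivalently, $\lambda_\chi$ is fixed by Frobenius, so it is constant on each orbit. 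Since $\lambda_\chi\in\Z_\ell$ and $\mathcal O_i$ is free of rank $f_i$ over $\Z_\ell$, we get $\mathcal O_i/\lambda_\chi\mathcal O_i\cong(\Z_\ell/\lambda_\chi\Z_\ell)^{f_i}$, and summing over the orbits yields $\coker(A)\otimes\Z_\ell\cong\bigoplus_{\chi\in\widehat G}\Z_\ell/\lambda_\chi\Z_\ell$. Thus the $\ell$-part of the Smith group depends only on the multiset of eigenvalues.

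It remains to read off the answer from the spectrum. The principal character contributes $\Z_\ell/2r\Z_\ell$ (as $k=\tfrac{q-1}{2}=2r$); the $2r$ characters with eigenvalue $\tfrac{m-1}{2}$ contribute $(\Z_\ell/\tfrac{m-1}{2}\Z_\ell)^{2r}$ and the $2r$ characters with eigenvalue $-\tfrac{m+1}{2}$ contribute $(\Z_\ell/\tfrac{m+1}{2}\Z_\ell)^{2r}$. Since $\tfrac{m+1}{2}-\tfrac{m-1}{2}=1$, the two numbers $\tfrac{m-1}{2}$ and $\tfrac{m+1}{2}$ are coprime, and their product is $\tfrac{m^2-1}{4}=r$; so by the Chinese Remainder Theorem I can pair the summands to obtain $(\Z_\ell/r\Z_\ell)^{2r}$. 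Assembling the contributions over all primes $\ell\neq p$ (and recalling there is no $p$-part) gives $S(P^*(q))\cong\Z/2r\Z\oplus(\Z/r\Z)^{2r}$, which is exactly the group computed for $\Paley(q)$ in \cite{CSXPaley}.

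I expect the main obstacle to be concentrated entirely in the second paragraph, namely justifying the clean local decomposition of $\coker(A)$: that $\Z_\ell[G]$ splits as a product of unramified discrete valuation rings and that the cokernel descends to $\bigoplus_\chi\Z_\ell/\lambda_\chi\Z_\ell$ because the eigenvalues are rational integers. This is precisely the argument carried out for cospectral Cayley graphs on elementary abelian groups in \cite[\S2]{CSXPaley}, and it applies here unchanged once one observes that cospectrality forces $P^*(q)$ to share the eigenvalue data of $\Paley(q)$; the remaining steps, the nonsingularity bookkeeping and the coprimality pairing, are routine.
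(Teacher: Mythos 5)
Your proposal is correct and takes essentially the same route as the paper, which simply observes that the argument of \cite[\S 2]{CSXPaley} for the Smith group depends only on the spectrum and on being a Cayley graph on an elementary abelian group of order $q$, and hence transfers verbatim to $P^*(q)$. You have merely written out the details that the paper delegates to that citation: the \'etale splitting of $\Z_\ell[G]$ for $\ell\neq p$ into unramified factors indexed by Frobenius orbits of characters, the integrality of the eigenvalues (so they are constant on orbits), and the final Chinese Remainder pairing of $\tfrac{\sqrt q-1}{2}$ with $\tfrac{\sqrt q+1}{2}$.
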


\begin{theorem}\label{critgp} Let $K(P^*(q))=K(P^*(q))_p\oplus K(P^*(q))_{p'}$ be the decomposition
of the critical group of $P^*(q)$ into its Sylow $p$-subgroup and $p$-complement. 
Then $K(P^*(q))_{p'}\cong (\Z/r\Z)^{2r}$, where $r=\frac{q-1}{4}$.
The order of  $K(P^*(q))_{p}$ is equal to $q^{\frac{q-3}{2}}$.
\end{theorem}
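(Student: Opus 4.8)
The plan is to reduce both assertions to the spectrum of the Laplacian $L=kI-A$ together with a local analysis away from the prime $p$. Since $P^*(q)$ is $k$-regular with $k=\frac{q-1}{2}$, we have $L=kI-A$, and the $A$-eigenvalues $k,\frac{-1+\sqrt q}{2},\frac{-1-\sqrt q}{2}$ produce Laplacian eigenvalues $0$, $\lambda_1=\frac{\sqrt q(\sqrt q-1)}{2}$, $\lambda_2=\frac{\sqrt q(\sqrt q+1)}{2}$ with multiplicities $1$, $\frac{q-1}{2}$, $\frac{q-1}{2}$. Because $q=p^{2t}$, the quantity $m:=\sqrt q=p^t$ is an integer, so all three eigenvalues are integers. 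I would treat the two assertions separately: the group structure of the $p'$-part by a local diagonalization (the content imported from \cite{CSXPaley}), and the order of the $p$-part by Kirchhoff's matrix-tree theorem.

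For the $p'$-part, fix a prime $\ell\neq p$ and localize. Let $H\subset\Z^{\Fq}$ be the trace-zero sublattice $\{x:\sum_v x_v=0\}$, so that $K(P^*(q))=H/L(\Z^{\Fq})$. Since $\ell\nmid q$, the index-$q$ inclusion $\Z\allone\oplus H\subseteq\Z^{\Fq}$ becomes an equality after tensoring with $\Z_\ell$, giving $\Z_\ell^{\Fq}=\Z_\ell\allone\oplus H_\ell$ as $L$-modules and $K(P^*(q))\otimes\Z_\ell=\coker(L|_{H_\ell})$. On $H_\ell$ the operator satisfies $(L-\lambda_1)(L-\lambda_2)=0$, and $\lambda_1-\lambda_2=-m=-p^t$ is a unit in $\Z_\ell$; hence the idempotents $\frac{L-\lambda_2}{\lambda_1-\lambda_2}$ and $\frac{L-\lambda_1}{\lambda_2-\lambda_1}$ are defined over $\Z_\ell$ and split $H_\ell=H_1\oplus H_2$ into $L$-invariant summands of rank $\frac{q-1}{2}$ on which $L$ acts as the scalars $\lambda_1,\lambda_2$. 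Therefore
\[
K(P^*(q))\otimes\Z_\ell\cong(\Z_\ell/\lambda_1\Z_\ell)^{(q-1)/2}\oplus(\Z_\ell/\lambda_2\Z_\ell)^{(q-1)/2}.
\]
Assembling over all $\ell\neq p$ and recording that the $p'$-parts of $\lambda_1,\lambda_2$ are $a:=\frac{m-1}{2}$ and $b:=\frac{m+1}{2}$, which are coprime integers with $ab=\frac{m^2-1}{4}=\frac{q-1}{4}=r$, I would conclude $K(P^*(q))_{p'}\cong(\Z/a\Z)^{(q-1)/2}\oplus(\Z/b\Z)^{(q-1)/2}$, and then pair the coprime factors by the Chinese Remainder Theorem to obtain $(\Z/r\Z)^{2r}$, using $2r=\frac{q-1}{2}$. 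This reproduces, in the present integer-eigenvalue setting, the spectral argument of \cite{CSXPaley} valid for any cospectral Cayley graph on an elementary abelian group of order $q$.

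For the order of the $p$-part, I would invoke Kirchhoff's theorem, which gives
\[
\abs{K(P^*(q))}=\frac{1}{q}\,\lambda_1^{(q-1)/2}\lambda_2^{(q-1)/2}=\frac{1}{q}\Bigl(\tfrac{m(m-1)}{2}\Bigr)^{(q-1)/2}\Bigl(\tfrac{m(m+1)}{2}\Bigr)^{(q-1)/2}.
\]
Since $a=\frac{m-1}{2}$, $b=\frac{m+1}{2}$ and $2$ are all prime to $p$, the only $p$-contribution comes from $m^{q-1}/q=p^{t(q-1)-2t}=p^{t(q-3)}=q^{(q-3)/2}$, which is therefore the $p$-part of $\abs{K(P^*(q))}$ and hence equals $\abs{K(P^*(q))_p}$. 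Equivalently, $\abs{K(P^*(q))_{p'}}=r^{2r}$ is prime to $p$, so $\abs{K(P^*(q))_p}=\abs{K(P^*(q))}/r^{2r}=q^{(q-3)/2}$.

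I expect no serious obstacle in the statement itself, since it rests only on the spectrum and standard $\ell$-adic localization; the genuinely delicate problem—the full module structure of the Sylow $p$-subgroup $K(P^*(q))_p$, as opposed to merely its order—is not addressed here and requires the modular (characteristic-$p$) analysis of the adjacency operator carried out in the later sections. The one point demanding care above is the pair of direct-summand decompositions over $\Z_\ell$: splitting off $\allone$ uses $\ell\nmid q$, and separating the two nonzero eigenspaces uses that $\lambda_1-\lambda_2=\pm p^t$ is an $\ell$-adic unit for every $\ell\neq p$.
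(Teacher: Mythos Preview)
Your argument is correct. The paper itself does not prove this theorem directly; it imports the computation from \cite[\S2]{CSXPaley}, where the $p'$-part is obtained by diagonalizing the adjacency matrix over a cyclotomic ring in which $p$ is inverted, using the additive character table of $\Fq$ (hence the explicit hypothesis of being a Cayley graph on an elementary abelian group of order $q$). Your route is a genuine simplification: you stay in $\Z_\ell$ and split $H_\ell$ using only the quadratic relation $(L-\lambda_1)(L-\lambda_2)=0$ inherited from strong regularity, together with the fact that $\lambda_1-\lambda_2=-p^t$ is an $\ell$-adic unit. This avoids cyclotomic extensions entirely and applies verbatim to any strongly regular graph cospectral with $\Paley(q)$, not just to Cayley graphs on $(\Fq,+)$. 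The derivation of $\abs{K_p}=q^{(q-3)/2}$ from Kirchhoff's theorem is the same in both treatments.
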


Later, we shall see that the critical groups of the Paley graphs and Peisert graphs for the same $q$ are generally not isomorphic, although they are isomorphic when $q=p^2$.

\section{The Sylow $p$-subgroup of the critical group}\label{p-part}
We are left with the problem of determining the cyclic decomposition of the 
Sylow $p$-subgroup of $K(P^*(q))$ or, in other words, the
$p$-elementary divisors of $L$.

Let $R_0=\Z[\xi]$, where $\xi$ is a primitive $(q-1)$-st root of unity in an
algebraic closure of $\Q$, and let $\pi$
be a prime ideal of $R_0$ containing $p$. As $p$ is unramified in $R_0$,
in the localization $R=(R_0)_\pi$, the ideal $pR$ is a maximal with $R/pR\cong\Fq$. 
We denote by $v_p(r)$ the $p$-adic valuation of an element $r\in R$. 
Let $R^\Fq$ be the free $R$-module with basis indexed by $\Fq$. For clarity, we
write the basis element corresponding to $x\in \Fq$ as $[x]$.

Let $T:\Fstar\to R^\times$, $T(\beta^j)=\xi^j$, 
be the Teichm\"uller character, which generates the cyclic group $\Hom(\Fstar, R^\times)$.

Then $\Fstar$ acts on $R^\Fq$, which decomposes as the direct sum $R[0]\oplus R^\Fstar$,
and $R^\Fstar$  decomposes further into the direct sum  
of  $\Fstar$-invariant components of rank 1,  affording the characters $T^i$,
$i=0$,\dots,$q-2$. 
The component affording $T^i$ is spanned by
$$
e_i=\sum_{x\in\Fstar}T^i(x^{-1})[x].
$$
Here the subscript $i$ is read modulo $q-1$.
So $R^\Fq$ has  basis $\{e_i \mid i=1,\ldots q-2\} \cup \{e_0, [0]\}$,
where we have separated out the basis for the $\Fstar$-fixed points.

Next consider the action of the subgroup $C_0$. The characters $T^i$,$T^{i+r}$,$T^{i+2r}$, and $T^{i+3r}$ are equal when restricted to  $C_0$ and for $i\notin\{0, r, 2r, 3r\}$
the elements $e_i$, $e_{i+r}$, $e_{i+2r}$ and  $e_{i+3r}$ form a basis
for the $C_0$-isotypic component 
$$
M_i=\{m\in R^\Fq \mid ym=T^i(y)m, \quad\forall y\in C_0\}
$$
of $R^\Fq$ for $1\leq i\leq \frac{q-5}{4}$.
In addition we denote by $M_0$ the isotypic component of the principal character
of $C_0$, namely the submodule of $C_0$-fixed points in $R^\Fq$. As a basis
for $M_0$, we take $\allone=\sum_{x\in \Fq}x=e_0+[0]$, $[0]$, $e_r$, $e_{2r}$ and $e_3r$. 
Thus,
\begin{equation}\label{directsum}
R^\Fq=M_0\oplus\bigoplus_{i=1}^{\frac{q-5}{4}} M_i.
\end{equation}

Since $\mu_LL$ is an $RC_0$-module homomophism, it maps each summand into itself, and
so with respect to the basis formed from the above bases of the $M_i$, the matrix
of $\mu_L$ is block-diagonal with $\frac{q-5}{4}$ $4\times 4$ blocks and a single
$5\times 5$ block. Next we compute these blocks. In these computations,
Jacobi sums will arise, so we recall their definition.
\begin{definition}
Let $\theta$ and  $\psi$ be multiplicative characters of $\Fstar$ taking values in
$R^\times$. By convention, we extend the domain of characters to $\Fq$
by setting the value of the principal character at $0$ 
to be $1$, while nonprincipal characters are assigned the value $0$ there.
The {\it Jacobi sum} is  
\begin{equation}\label{jacobisum}
J(\theta,\psi)=\sum_{x\in\Fq}\theta(x)\psi(1-x). 
\end{equation}
\end{definition}
We refer to \cite[Ch. 2]{Berndt-Evans-Williams} for the elementary formal properties
of Jacobi sums.
At this point we fix some notation for the rest of the paper.
Let $r=\frac{(q-1)}{4}$, $\eta=\xi^r$, $\alpha=\frac{(1-\eta)}{2}$ and $\overline\alpha=\frac{(1+\eta)}{2}$.
Then the characteristic function of $S'$ is
\begin{equation}\label{charfun}
\delta_{S'}=\frac{1}{2}(T^0-\delta_0+\alpha T^r+\overline\alpha T^{-r}),
\end{equation}
where $\delta_0$ takes the value $1$ at $0$ and $0$ on $\Fstar$ and 
(by our convention) the principal character $T^0$ sends all elements of $\Fq$ to $1$.
 We also note for later use that since $q\equiv 1\pmod 8$, we have $T^r(-1)=1$.
\begin{lemma}\label{Lei}
Suppose $i\notin \{0,r,3r\}$. Then 
$$
{\mu_L}(e_i)=\frac{1}{2} (qe_i-\overline\alpha J(T^{-i},T^{-r})e_{i+r}-\alpha J(T^{-i},T^{-3r})e_{i+3r}).
$$
\end{lemma}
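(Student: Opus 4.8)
The plan is to compute the action of the Laplacian directly, exploiting that $P^*(q)$ is $k$-regular with $k=\frac{q-1}{2}$, so that $\mu_L=k\,\id-A$, where $A$ is the adjacency operator on $R^\Fq$. Since the connection set is $S'$, the operator $A$ is convolution by $\sigma=\sum_{s\in S'}[s]$; that is, $A[y]=\sum_{s\in S'}[y+s]$, and consequently $A(e_i)=\sigma * e_i$. The idea is to expand $\sigma$ via the character formula \eqref{charfun} for $\delta_{S'}$, writing $\sigma=\tfrac12\sum_{x\in\Fq}\bigl(T^0(x)-\delta_0(x)+\alpha T^r(x)+\overline\alpha T^{-r}(x)\bigr)[x]$, and then convolve each of the four pieces with $e_i=\sum_{x\in\Fstar}T^{-i}(x)[x]$.

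First I would dispose of the two easy pieces. Convolving the ``all-ones'' piece $\sum_x T^0(x)[x]$ with $e_i$ produces, in the coefficient of each $[z]$, the character sum $\sum_{x\in\Fstar}T^{-i}(x)$, which vanishes because $i\not\equiv0\pmod{q-1}$; so this piece contributes nothing. The piece $-\delta_0=-[0]$ is the convolution identity and simply returns $-\tfrac12 e_i$, which accounts for the ``$-1$'' appearing in the eigenvalues. The main step is the two nonprincipal character pieces. Convolving $\sum_x T^{-r}(x)[x]$ with $e_i$ gives, as coefficient of $[z]$ for $z\neq0$, the sum $\sum_{x\in\Fstar}T^{-r}(z-x)T^{-i}(x)$; the substitution $x=zu$ factors this as $T^{-(i+r)}(z)\,J(T^{-i},T^{-r})$, where $J$ is the Jacobi sum of \eqref{jacobisum}. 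Summing over $z$ and recalling $e_j=\sum_z T^{-j}(z)[z]$ identifies the result as $J(T^{-i},T^{-r})\,e_{i+r}$. The same reasoning applied to the $T^r$ piece gives $J(T^{-i},T^{r})\,e_{i-r}$, and here I would use $4r=q-1\equiv0$ to rewrite $e_{i-r}=e_{i+3r}$ and $T^{r}=T^{-3r}$. Assembling $\mu_L(e_i)=k\,e_i-\sigma * e_i$ and simplifying $\tfrac{q-1}{2}+\tfrac12=\tfrac q2$ then yields exactly the stated formula.

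The one point needing care—and the source of the hypothesis $i\notin\{0,r,3r\}$—is the vanishing of the boundary contributions. The exclusion $i\neq0$ is precisely what kills the all-ones piece. For each of the two Jacobi-sum pieces one must check that the coefficient of $[0]$ vanishes: this coefficient reduces (upon setting $s=-x$) to a Gauss-type sum $\sum_{x}T^{\pm r-i}(x)$ times $T^{\pm r}(-1)$, which is nonzero exactly when $i\equiv r$ (for the $T^r$ piece) or $i\equiv 3r$ (for the $T^{-r}$ piece). Excluding these two indices is what guarantees the off-diagonal entries land cleanly on $e_{i+r}$ and $e_{i+3r}$ with no correction term. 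Equivalently, for $i\notin\{0,r,3r\}$ all of the characters $T^{-i}$, $T^{\pm r}$ and their products $T^{-i}T^{\pm r}$ are nonprincipal, which is exactly the regime in which the substitution $x=zu$ produces an honest Jacobi sum (and in which the nominal exceptions at $u\in\{0,1\}$ contribute nothing, since nonprincipal characters vanish at $0$). I expect checking these nondegeneracy conditions to be the only subtlety; the remainder is a bookkeeping of the index shifts modulo $q-1$.
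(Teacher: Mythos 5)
Your proposal is correct and follows essentially the same route as the paper: expand $\delta_{S'}$ via the character identity \eqref{charfun}, convolve each piece with $e_i$, convert the resulting double sums into Jacobi sums by the substitution $x=zu$ (the paper's $z=x+y$ followed by division by $z$), and use the hypothesis $i\notin\{0,r,3r\}$ to kill the principal-character piece and the $[0]$-boundary terms. The index bookkeeping ($T^r=T^{-3r}$, $e_{i-r}=e_{i+3r}$) and the final assembly $\tfrac{q-1}{2}+\tfrac12=\tfrac q2$ all match the paper's computation.
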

\begin{proof}
Since $L=(\frac{q-1}{2})I-A$, we will work with $A$.
By definition of $A$, we have
\begin{equation*}
\begin{aligned}
2{\mu_A}(e_i)&=2\sum_{x\in\Fstar}T^{-i}(x)\sum_{y\in\Fq}\delta_{S'}(y)[x+y]\\
&=\sum_{x\in\Fstar}T^{-i}(x)\sum_{y\in\Fq}(T^0(y)-\delta_0(y)+
\alpha T^r(y)+\overline\alpha T^{-r}(y))[x+y]\\
&=\sum_{x\in\Fstar}T^{-i}(x)\sum_{y\in\Fq}[x+y]
-\sum_{x\in\Fstar}T^{-i}(x)[x]\\
&+\alpha\sum_{x\in\Fstar}T^{-i}(x)\sum_{y\in\Fq}T^r(y)[x+y]
+\overline\alpha\sum_{x\in\Fstar}T^{-i}(x)\sum_{y\in\Fq}T^{-r}(y)[x+y]\\
&=0-e_i+\alpha\sigma+\overline\alpha\sigma',
\end{aligned}
\end{equation*}
where
\begin{equation*}
\sigma=\sum_{x\in\Fstar}T^{-i}(x)\sum_{y\in\Fq}T^r(y)[x+y]\quad\text{and}\quad
\sigma'=\sum_{x\in\Fstar}T^{-i}(x)\sum_{y\in\Fq}T^{-r}(y)[x+y].
\end{equation*}
Then, by substituting $z=x+y$ and changing the order of summation, we have
\begin{equation*}
\begin{aligned}
\sigma&=\sum_{z\in\Fq}\sum_{x\in\Fstar}T^{-i}(x)T^r(z-x)[z]\\
&=\sum_{z\in\Fstar}\sum_{x\in\Fstar}T^{-i}(x)T^r(z-x)[z], 
\end{aligned}
\end{equation*}
as the inner sum vanishes for $z=0$, by the orthogonality of characters.
Then as 
\begin{equation*}
T^{-i}(x)T^r(z-x)=T^{-i}(x/z)T^r(1-x/z)T^{-i+r}(z)
\end{equation*}
we obtain
\begin{equation*}
\sigma=J(T^{-i},T^{-3r})e_{i+3r}.
\end{equation*}
Similarly, $\sigma'=J(T^{-i},T^{-r})e_{i+r}$. 
\end{proof}

\begin{lemma}\label{Ler}
\begin{itemize}
\item[(i)] $\mu_L(\allone)=0$.
\item[(ii)] $\mu_L([0])=\frac{1}{2}(-\allone+ q[0]-\overline\alpha e_r-\alpha e_{3r}).$
\item[(iii)] $\mu_L(e_r)=\frac{1}{2}(\alpha\allone-q\alpha[0]+q e_r-\overline\alpha J(T^{-r},T^{-r}) e_{2r})$.
\item[(iv)] $\mu_L(e_{2r})=\frac{1}{2}(-\alpha J(T^{-2r},T^{-3r})e_r +q e_{2r}
-\overline\alpha J(T^{-2r},T^{-r})e_{3r})$.
\item[(v)] $\mu_L(e_{3r})=\frac{1}{2}(\overline\alpha\allone-q\overline\alpha[0]-\alpha J(T^{-3r},T^{-3r})e_{2r}+qe_{3r}).$
\end{itemize}
\end{lemma}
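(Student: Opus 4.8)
The plan is to compute $\mu_A$ on each of the five basis vectors $\allone$, $[0]$, $e_r$, $e_{2r}$, $e_{3r}$ of $M_0$ and then read off the stated formulas via $\mu_L=\frac{q-1}{2}\id-\mu_A$, exactly as was done for the generic index in Lemma~\ref{Lei}. Part (i) is immediate: since $P^*(q)$ is regular of degree $k=\frac{q-1}{2}$ and $\allone$ is the all-ones vector, $\mu_L(\allone)=k\allone-\mu_A(\allone)=k\allone-k\allone=0$. For part (ii) I would expand $\mu_A([0])=\sum_{y\in\Fq}\delta_{S'}(y)[y]$ directly, substituting the formula \eqref{charfun} for $\delta_{S'}$. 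The four resulting sums are elementary: $\sum_y T^0(y)[y]=\allone$, $\sum_y\delta_0(y)[y]=[0]$, while the two character sums give $\sum_y T^r(y)[y]=e_{3r}$ and $\sum_y T^{-r}(y)[y]=e_r$ (recalling $e_j=\sum_{x\in\Fstar} T^{-j}(x)[x]$ and $-r\equiv 3r\pmod{q-1}$). Assembling these and applying $\mu_L=\frac{q-1}{2}\id-\mu_A$ yields (ii).

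The essential cases are (iii) and (v), where I would follow the computation of Lemma~\ref{Lei} with $i=r$ and $i=3r$ respectively, forming $2\mu_A(e_i)=-e_i+\alpha\sigma+\overline\alpha\sigma'$ with $\sigma,\sigma'$ as there. The key difference from the generic case is that the inner sum over $x$ no longer vanishes at $z=0$: for $i=r$ this happens in $\sigma$, where the character product $T^{-r}\cdot T^{r}$ becomes principal, and for $i=3r$ it happens in $\sigma'$, where $T^{-3r}\cdot T^{-r}$ becomes principal. In each instance the $z=0$ term evaluates to $(q-1)[0]$, using the standing fact $T^r(-1)=1$. The surviving $z\ne 0$ part contributes a multiple of $e_0=\sum_{x\in\Fstar}[x]$ whose coefficient is a Jacobi sum of the form $J(\chi,\chi^{-1})$; here I would invoke the standard evaluation $J(\chi,\chi^{-1})=-\chi(-1)$ for nontrivial $\chi$ (see \cite[Ch.~2]{Berndt-Evans-Williams}), which gives coefficient $-1$. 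Finally, rewriting $e_0=\allone-[0]$ combines the $e_0$ and $[0]$ contributions into the terms $\alpha\allone-q\alpha[0]$ (resp.\ $\overline\alpha\allone-q\overline\alpha[0]$) appearing in (iii) (resp.\ (v)); the remaining summand is the Jacobi-sum term $-\overline\alpha J(T^{-r},T^{-r})e_{2r}$ (resp.\ $-\alpha J(T^{-3r},T^{-3r})e_{2r}$), coming from the other of $\sigma,\sigma'$ and computed just as in Lemma~\ref{Lei}.

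For part (iv) note that $2r\notin\{0,r,3r\}$, so none of the character products that occur become principal and all the $z=0$ terms vanish; the derivation of Lemma~\ref{Lei} therefore applies verbatim, and after reducing the index $5r\equiv r\pmod{q-1}$ so that $e_{5r}=e_r$ one obtains (iv) directly. The only real obstacle is the careful bookkeeping in (iii) and (v): one must correctly isolate the extra $z=0$ contribution that is absent from the generic formula, pin down the sign via $J(\chi,\chi^{-1})=-\chi(-1)$ together with $T^r(-1)=1$, and then fold $e_0=\allone-[0]$ back into the $M_0$-basis so that the coefficients of $\allone$ and $[0]$ come out as claimed.
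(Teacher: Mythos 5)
Your proposal is correct and follows essentially the same route as the paper: expand $2\mu_A$ on each basis vector using the decomposition \eqref{charfun} of $\delta_{S'}$, split into the sums $\sigma,\sigma'$, isolate the extra $(q-1)[0]$ contribution at $z=0$ when the relevant character product becomes principal, and evaluate the degenerate Jacobi sum as $J(\chi,\chi^{-1})=-\chi(-1)=-1$ before folding $e_0=\allone-[0]$ back into the basis. The only cosmetic difference is that the paper deduces (v) from (iii) via a field automorphism of $\Fq$ rather than repeating the symmetric computation.
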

\begin{proof} As $L=(\frac{q-1}{2})I-A$, it is enough to compute $2\mu_A$ on the basis elements.Part (i) is obvious. For (ii) we have
\begin{equation*}
\begin{aligned}
2\mu_A([0])&=2\sum_{y\in\Fq}\chi_{S'}(y)[y]\\
&=\allone -[0]+\alpha\sum_{y\in\Fq}T^{-3r}(y)[y]+\overline\alpha\sum_{y\in\Fq}T^{-r}(y)[y]\\
&=\allone-[0]+\alpha e_{3r}+\overline\alpha e_{r}.
\end{aligned}
\end{equation*}
Part (iv) is the case $i=2r$ of  Lemma~\ref{Lei}. It remains to prove
(iii) and (v). It suffices to prove (iii) since the two cases are related by an automorphism of $\Fq$.
By definition of $A$, we have
\begin{equation*}
\begin{aligned}
2\mu_A(e_{r})&=2\sum_{x\in\Fstar}T^{-r}(x)\sum_{y\in\Fq}\delta_{S'}(y)[x+y]\\
 &=\sum_{x\in\Fstar}T^{-r}(x)\sum_{y\in\Fq}(T^0(y)-\delta_0(y)+
\alpha T^r(y)+\overline\alpha T^{-r}(y))[x+y]\\
&=\sum_{x\in\Fstar}T^{-r}(x)\sum_{y\in\Fq}[x+y]
-\sum_{x\in\Fstar}T^{-r}(x)[x]\\
&+\alpha\sum_{x\in\Fstar}T^{-r}(x)\sum_{y\in\Fq}T^r(y)[x+y]
+\overline\alpha\sum_{x\in\Fstar}T^{-r}(x)\sum_{y\in\Fq}T^{-r}(y)[x+y]\\
&=0-e_{r}+\alpha\sigma+\overline\alpha\sigma',
\end{aligned}
\end{equation*}
where
\begin{equation*}
\sigma=\sum_{x\in\Fstar}T^{-r}(x)\sum_{y\in\Fq}T^r(y)[x+y]\quad\text{and}\quad
\sigma'=\sum_{x\in\Fstar}T^{-r}(x)\sum_{y\in\Fq}T^{-r}(y)[x+y].
\end{equation*}
Then, by substituting $z=x+y$ and changing the order of summation, we have
\begin{equation*}
\begin{aligned}
\sigma&=\sum_{z\in\Fq}\sum_{x\in\Fstar}T^{-r}(x)T^r(z-x)[z]\\
&=(q-1)[0]+\sum_{z\in\Fstar}\sum_{x\in\Fstar}T^{-r}(x)T^r(z-x)[z],
\end{aligned}
\end{equation*}
as the inner sum when $z=0$ is $(q-1)[0]$.

For $z\neq 0$ we have
\begin{equation*}
T^{-r}(x)T^r(z-x)=T^{-r}(x/z)T^r(1-x/z)T^{-4r}(z)
\end{equation*}
and so
\begin{equation*}
\sum_{x\in\Fstar} T^{-r}(x/z)T^r(1-x/z)T^{-4r}(z)=J(T^{-r},T^{r})[z]
=-T^r(-1)[z]=-[z]
\end{equation*}
So
\begin{equation*}
\sum_{z\in\Fstar}\sum_{x\in\Fstar}T^{-r}(x)T^r(z-x)[z]=-(\allone-[0]).
\end{equation*}
Thus, $\sigma=q[0]-\allone$. We now turn to $\sigma'$.
By substituting $z=x+y$ and changing the order of summation, we have
\begin{equation*}
\begin{aligned}
\sigma'&=\sum_{z\in\Fq}\sum_{x\in\Fstar}T^{-r}(x)T^{-r}(z-x)[z]\\
&=\sum_{z\in\Fstar}\sum_{x\in\Fstar}T^{-r}(x)T^{-r}(z-x)[z],
\end{aligned}
\end{equation*}
as the inner sum when $z=0$ is $0$. 
For $z\neq 0$ we have
\begin{equation*}
T^{-r}(x)T^{-r}(z-x)=T^{-r}(x/z)T^{-r}(1-x/z)T^{-2r}(z),
\end{equation*}
and so
\begin{equation*}
\sigma'=J(T^{-r},T^{-r})e_{2r}.
\end{equation*}
\end{proof}

An integer $j$ which is not divisible by $q-1$ has, when reduced modulo 
$q-1$, a unique $p$-digit expresssion $j=a_0+a_1p+a_2p^2+\cdots+a_{2t-1}p^{2t-1}$, where $0\leq a_i\leq p-1$. We shall write this $p$-digit expression a $2t$-tuple $(a_0,a_1,\ldots,a_{2t-1})$. Let $s(j)$ denote the sum $\sum_ia_i$ of the $p$-digits of $j$ modulo $q-1$. In this notation, the tuple for $r=\frac{q-1}{4}$ has $a_i=\frac{3p-1}{4}$
for even $i$ and $a_i=\frac{p-3}{4}$ for odd $i$, while
the tuple for $3r$ have the same entries but in the positions of opposite
parity. We have $s(r)=s(3r)=t(p-1)$. The $p$-digits of $2r$ are all $\frac{p-1}{2}$,
so $s(2r)=t(p-1)$ also.
 
By Stickelberger's Theorem \cite{stick} (see \cite[p.~636]{dwork} for further reference)
and the relation between Gauss sums and Jacobi sums,
we know that when $i$, $j$ and $i+j$ are not divisible 
by $q-1$ the $p$-adic valuation of $J(T^{-i},T^{-j})$  is equal to 
\begin{equation*}
c(i,j):=\frac{1}{p-1}(s(i)+s(j)-s(i+j)),
\end{equation*}
This valuation can be viewed as  the number of carries, when adding the $p$-expansions of $i$ and $j$, modulo $q-1$. 

The following equations are immediate.
\begin{lemma} \label{carries}
Suppose $1\leq i\leq q-2$ and $i\neq r$, $2r$, $3r$. Then
\begin{itemize}
\item[(i)] $c(i,r)+c(q-1-i,r)=2t$.
\item[(ii)]  $c(i,r)+c(i+r,3r)+c(i+2r,r)+c(i+3r,3r)=4t$.
\item[(iii)]$c(i,r)+c(i+2r,r)=c(i,3r)+c(i+2r,3r)$.
\end{itemize}
\end{lemma}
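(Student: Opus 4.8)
Lemma \ref{carries} asserts three identities among the carry-count quantities $c(i,j) = \frac{1}{p-1}(s(i)+s(j)-s(i+j))$, where indices are read modulo $q-1$ and $s$ denotes the $p$-digit sum. These are stated as "immediate," so the proof should be short and combinatorial.

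**My plan.**

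The plan is to reduce each identity to a statement purely about digit sums by expanding the definition of $c$, so that the intermediate $s$-values cancel, and then to invoke the two special facts recorded just before the lemma: that $s(r) = s(2r) = s(3r) = t(p-1)$, and the explicit digit patterns of $r$, $2r$, $3r$ (namely $r$ has digits $\frac{3p-1}{4}$ in even positions and $\frac{p-3}{4}$ in odd positions, $3r$ swaps the parities, and $2r$ is all $\frac{p-1}{2}$). A key auxiliary observation I would establish first is the complementation rule: for any $i$ with $q-1 \nmid i$, we have $s(i) + s(q-1-i) = 2t(p-1)$, because $q-1 = \sum_{i=0}^{2t-1}(p-1)p^i$ has every $p$-digit equal to $p-1$, so subtracting $i$ digitwise (no borrows occur when $q-1 \nmid i$) sends each digit $a_j$ to $p-1-a_j$.

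For (i), I would write $c(i,r)+c(q-1-i,r) = \frac{1}{p-1}\big(s(i)+s(r)-s(i+r)\big) + \frac{1}{p-1}\big(s(q-1-i)+s(r)-s(q-1-i+r)\big)$. Using $s(i)+s(q-1-i)=2t(p-1)$, $s(r)=t(p-1)$, and the analogous complementation $s(i+r)+s(q-1-i-r) = s(i+r)+s((q-1-i)+r) \equiv$ (mod $q-1$) $= 2t(p-1)$, the two $s(\cdot+r)$ terms cancel against the complement identity, leaving $\frac{1}{p-1}\big(2t(p-1)+2t(p-1)-2t(p-1)\big) = 2t$. For (ii), the four carry terms telescope: summing the definitions gives $\frac{1}{p-1}\big(s(i)+s(r)-s(i+r)\big)+\cdots$, and since the indices cycle through $i, i+r, i+2r, i+3r$ and back (as $4r\equiv 0$), every $s(i+kr)$ appears once with a plus and once with a minus and cancels, leaving $\frac{1}{p-1}\big(2s(r)+2s(3r)\big) = \frac{1}{p-1}\cdot 4t(p-1) = 4t$. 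For (iii), I would similarly cancel the $s(r)$ versus $s(3r)$ contributions (equal by the digit-sum computation) and the terms $s(i), s(i+2r)$ which appear on both sides, reducing it to $s(i+r)+s(i+3r) = s(i+r)+s(i+3r)$, i.e. an identity, once one checks the surviving $s$-terms match; concretely both sides equal $\frac{1}{p-1}\big(s(i)+s(i+2r)+2t(p-1)-s(i+r)-s(i+3r)\big)$ after substituting $s(r)=s(3r)=t(p-1)$.

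**The main obstacle.**

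The only genuine subtlety is bookkeeping with the reduction modulo $q-1$: the complementation identity $s(i)+s(q-1-i)=2t(p-1)$ holds without borrows precisely when $q-1\nmid i$, and one must verify that all indices appearing (such as $q-1-i+r$, interpreted as $(q-1-i)+r \bmod q-1$) avoid the forbidden residues $0, r, 2r, 3r$ so that $c$ is defined and the no-borrow subtraction is valid. Since the hypothesis excludes $i \in \{r,2r,3r\}$ and $i\neq 0$, I expect this to be routine but worth a sentence. The cleanest write-up is to state the complementation lemma once and then let the telescoping cancellations in (i)--(iii) follow mechanically, which is why the author calls these "immediate."
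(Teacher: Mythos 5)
Your treatment of parts (ii) and (iii) is correct: the telescoping cancellation of the intermediate digit sums, together with $s(r)=s(2r)=s(3r)=t(p-1)$, is surely all the author intends by calling these equations ``immediate'' (the paper supplies no proof to compare against). Your auxiliary complementation rule $s(i)+s(q-1-i)=2t(p-1)$ for $q-1\nmid i$ is also correctly stated and justified.

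The problem is part (i). The step in which you replace $s\bigl((q-1-i)+r\bigr)$ by $s(q-1-i-r)$ is false: modulo $q-1$ these arguments are $r-i$ and $-(i+r)$ respectively, which are different residues. The term actually occurring in $c(q-1-i,r)$ is $s(r-i)=s(-(i-r))$, and complementation pairs it with $s(i-r)=s(i+3r)$, not with $s(i+r)$. Carrying the computation through correctly gives
\[
c(i,r)+c(q-1-i,r)=2t+\tfrac{1}{p-1}\bigl(s(i+3r)-s(i+r)\bigr),
\]
and the correction term does not vanish in general: for $q=9$ (so $p=3$, $t=1$, $r=2$) and $i=1$ one computes $c(1,2)=1$ and $c(7,2)=2$, so the left-hand side is $3\neq 2=2t$. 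Thus statement (i) as printed is false, and your argument ``proves'' it only because of this sign slip. The identity that is true, and that Corollary~\ref{palindromic} actually needs, is $c(i,r)+c(q-1-i,3r)=2t$ (equivalently $c(i,r)+c(i+r,3r)=2t$, i.e.\ half of part (ii)); it follows from applying your complementation rule to both arguments at once, giving $c(a,b)+c(-a,-b)=2t$ whenever none of $a$, $b$, $a+b$ is divisible by $q-1$, together with $-r\equiv 3r$. You should record that corrected form rather than attempt to salvage the printed one.
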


\begin{theorem}\label{main}
\begin{enumerate}
\item The $p$-elementary divisors of $(\mu_L)_{\mid M_0}$ are $0$, $1$, $1$, $p^t$, $p^t$.
\item For $1\leq i\leq \frac{q-5}{4}$, 
consider the two lists $\{c(i,r), c(i+r,3r), c(i+2r,r), c(i+3r,3r)\}$
and
$\{c(i,3r), c(i+r,r), c(i+2r,3r), c(i+3r,r)\}$
and let $C_i$ be the list that contains the smallest element.
Then the four $p$-elementary divisors of $(\mu_L)_{\vert M_i}$ are $p^c$ for $c$ in $C_i$.
\end{enumerate}
\end{theorem}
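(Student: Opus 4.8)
The plan is to treat the blocks of $\mu_L$ separately, exploiting the $C_0$-isotypic decomposition \eqref{directsum} that already makes the matrix block-diagonal. First I record the arithmetic units: since $\eta=\xi^r$ is a primitive fourth root of unity, $1\pm\eta$ divides $2$ in $\Z[\eta]$, and $\pi$ lies over the odd prime $p$, so $v_p(\alpha)=v_p(\overline\alpha)=v_p(2)=0$. Consequently, by Stickelberger's theorem, every off-diagonal entry coming out of Lemmas~\ref{Lei} and \ref{Ler} is a unit times $p^{c(\cdot,\cdot)}$, while every diagonal entry is a unit times $p^{2t}$ (as $q=p^{2t}$). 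For part (1) I would write out the $5\times5$ matrix of $2\mu_L|_{M_0}$ from Lemma~\ref{Ler} in the basis $\allone,[0],e_r,e_{2r},e_{3r}$: the column $\mu_L(\allone)=0$ contributes the elementary divisor $0$, and the remaining $4\times4$ part reduces directly, using the values $c(r,r)=c(2r,r)=c(2r,3r)=c(3r,3r)=t$ (all immediate from $s(r)=s(2r)=s(3r)=t(p-1)$), to the diagonal form $1,1,p^t,p^t$.

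For part (2) I fix $i$ with $1\le i\le\frac{q-5}4$, so that $i,i+r,i+2r,i+3r$ all avoid $\{0,r,2r,3r\}$ and Lemma~\ref{Lei} applies to each, and I work with the explicit $4\times4$ matrix $N=2\mu_L|_{M_i}=qI-F-B$, where $F$ and $B$ are the forward and backward weighted $4$-cycles built from the $\overline\alpha$- and $\alpha$-terms. Its support omits exactly the four ``antidiagonal'' positions, and the valuations of its entries are $2t$ on the diagonal and the eight numbers $c(i+kr,r),c(i+kr,3r)$ off it. Since the elementary divisors of $N$ over the discrete valuation ring $R$ are $p^{D_1},p^{D_2-D_1},p^{D_3-D_2},p^{D_4-D_3}$, where $D_k$ is the $p$-adic valuation of the gcd of the $k\times k$ minors, it suffices to compute $D_1,\dots,D_4$. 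I would get $D_4$ painlessly from the spectrum: the only Laplacian eigenvalues are $0,\frac{q-\sqrt q}2,\frac{q+\sqrt q}2$, and since the $0$-eigenvector $\allone$ lies in $M_0$, the four eigenvalues of $\mu_L|_{M_i}$ all lie in $\{\frac{q\pm\sqrt q}2\}$, each of $p$-valuation $t$ (as $q\pm\sqrt q=p^t(p^t\pm1)$); hence $v_p(\det N)=4t$, matching the common sum $4t$ of both lists by Lemma~\ref{carries}(ii). For $D_1$, the four numbers $c(i+kr,r)$ telescope to sum $4t$, so the global minimum of the eight carry numbers is at most $t<2t$ and lies in $C_i$; thus $D_1=\min C_i$.

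It remains to identify $D_2$ and $D_3$ as the sums of the two and three smallest members of $C_i$. The upper bounds I would exhibit by explicit minors: the four positions carrying the values of $C_i$ are pairwise non-attacking (they form the support of the double transposition $(0\,1)(2\,3)$ or $(0\,3)(1\,2)$), so any $j$ of them span a $j\times j$ minor in which the $C_i$-matching has weight equal to the sum of the corresponding members of $C_i$; choosing the $j$ smallest members, whose sum is at most $jt<4t$, and checking that the competing matchings in that submatrix have weight at least as large (because they meet the diagonal, or because Lemma~\ref{carries}(iii) and its $r$-shift, namely $c(i,r)+c(i+2r,r)=c(i,3r)+c(i+2r,3r)$ and $c(i+r,r)+c(i+3r,r)=c(i+r,3r)+c(i+3r,3r)$, bound a product of other-list entries below by a product of $C_i$-entries), gives $D_j\le$ the target. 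The lower bounds require that no $k\times k$ minor is smaller, which I would approach by bounding each minor below by the minimum weight of a $k$-matching in the weighted support graph and arguing, from the same carry identities together with the fact that the two ``all-$r$'' and ``all-$3r$'' matchings also weigh $4t$, that once the global minimum is pinned in $C_i$ no matching mixing the two lists can beat the within-$C_i$ matching.

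The main obstacle is exactly this lower bound. The support permits genuinely ``mixed'' partial matchings: for instance rows $\{1,2\}$ with columns $\{0,1\}$ give a $2\times2$ minor of valuation $c(i,r)+c(i+r,r)$, one entry from each list, whose naive weight could a priori undercut the partial sums of $C_i$. Proving that it never does is where the combinatorics of carries must be used in full, converting the additive identities of Lemma~\ref{carries} into the precise inequalities among the eight carry numbers that are forced once $\min C_i$ is the global minimum. A secondary difficulty lurks in the borderline cases where two matchings tie for minimal weight: there a leading-term cancellation could raise a minor's valuation, so one must confirm, via the Gauss-sum factorization $J(\chi,\psi)=g(\chi)g(\psi)/g(\chi\psi)$ of the Jacobi sums, that any such coincidence is compatible with the claimed value $D_k$ rather than producing a spurious jump. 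This interplay between the additive carry relations and the multiplicative structure of the Jacobi sums is the delicate heart of the argument.
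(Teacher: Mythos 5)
Your proposal has a genuine gap, and it is precisely the one you flag yourself: the identification of $D_2$ and $D_3$ is never carried out, and the route you sketch for it is problematic. Bounding a $k\times k$ minor below by the minimum weight of a $k$-matching in the support graph does not suffice, because (a) a ``mixed'' matching such as your $c(i,r)+c(i+r,r)$ example is not obviously dominated by the partial sums of $C_i$ using only the additive identities of Lemma~\ref{carries}, and (b) when two matchings tie, cancellation can raise the minor's valuation, so even the upper bounds $D_j\le$ (sum of the $j$ smallest members of $C_i$) are not secured by exhibiting a single cheap matching. The same difficulty already infects your part (1): after reducing the $M_0$ block one meets combinations such as $\alpha^2J(T^{-3r},T^{-3r})-\overline\alpha^2J(T^{-r},T^{-r})=-\tfrac{\eta}{2}\bigl(J(T^{-3r},T^{-3r})+J(T^{-r},T^{-r})\bigr)$, whose valuation is a priori only $\ge t$, so that block does not ``reduce directly'' to $1,1,p^t,p^t$ without further input.

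The paper closes this gap with two ideas absent from your sketch. First, it reduces the $4\times4$ block matrix (\ref{Mi}) modulo $q=p^{2t}$: the diagonal entries die, and after permuting rows and columns the result is $R$-equivalent to a direct sum of two $2\times2$ blocks (\ref{blockform}). Since reduction mod $p^{2t}$ preserves the multiplicities $m_j$ for $j<2t$, and a $2\times2$ matrix over a discrete valuation ring has elementary divisors $p^{m},p^{d-m}$ with $m$ the minimum entry valuation and $d$ the determinant valuation, the mixed-matching problem evaporates: each $2\times2$ block contributes either its two diagonal exponents or its two antidiagonal exponents according to where its minimum sits, and the constant shift $D$ between the two blocks forces them to make the same choice, yielding exactly one of your two lists. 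Second---and this is what dispenses with all cancellation worries---the paper never proves the determinant valuations are exact block by block. It proves only the lower bounds $v_p(\kappa(\mu_L|_{M_i}))\ge 4t$ and $v_p(\kappa(\mu_L|_{M_0}))\ge 2t$, sums them over all blocks to get $(q-3)t$, and observes that this equals the valuation of $q^{(q-3)/2}$, the known order of the Sylow $p$-subgroup from Theorem~\ref{critgp}; equality of the sum then forces equality in every block simultaneously. Your per-block observation that $v_p(\det\mu_L|_{M_i})=4t$ because all four eigenvalues are $\tfrac{q\pm\sqrt q}{2}$ is correct and is a nice local substitute for part of the global count, but it does not by itself deliver $D_2$ and $D_3$; you would still need the mod-$q$ splitting into $2\times2$ blocks to finish.
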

\begin{proof}
If $X$ is a matrix with entries in $R$ or a homomorphism of finitely generated, free $R$-modules, we   let $m_j(X)$ denote the multiplicity 
of $p^j$ as a $p$-elementary divisor and  let $\kappa(X)$ denote the 
product of the nonzero $p$-elementary divisors. 
Thus $v_p(\kappa(X))=\sum_jjm_j(X)$, and in the case of our Laplacian matrix,
$\kappa(L)=\kappa(\mu_L)$ is the order of the $p$-Sylow subgroup of the critical group.
We first note that for any given power $p^s$, if two matrices $X$ and $X'$ over $R$ are equal modulo $p^s$ then $m_j(X)=m_j(X')$ for every $j<s$. 
Also, we have 
\begin{equation}\label{lowerbound}
v_p(\kappa(X))\geq \sum_{j=0}^{s-1} jm_j(X)+s(\rank(X)-\sum_{j=0}^{s-1}m_j(X)).
\end{equation}
Our proof will make use of these general facts in the following way.
We shall obtain a lower bound for $v_p(\kappa(L))$
by looking at the matrix of $\mu_L$ modulo $q$. Then we shall see that as this lower bound
coincides with the actual value of $v_p(\kappa(L))$ known from the Matrix-Tree Theorem,
we must actually have equality in several inequalities used to deduce the lower bound.
These inferences will enable us to complete the proof.

The matrix of $2{\mu_L}_{\vert M_i}$ is
\begin{equation}\label{Mi}
\begin{bmatrix}
q&-\alpha J(T^{-i-r},T^{-3r})&0&-\overline\alpha J(T^{-i-3r},T^{-r})\\
-\overline\alpha J(T^{-i},T^{-r})&q&-\alpha J(T^{-i-2r},T^{-3r})&0\\
0&-\overline\alpha J(T^{-i-r},T^{-r})&q&-\alpha J(T^{-i-3r},T^{-3r})\\
-\alpha J(T^{-i},T^{-3r})&0&-\overline\alpha J(T^{-i-2r},T^{-r})&q\\
\end{bmatrix}
\end{equation}

If we work modulo $q$, this matrix is $R$-equivalent to 
\begin{equation}\label{blockform}
B=\begin{bmatrix}
u_{11}J(T^{-i},T^{-r})&u_{12}J(T^{-i-2r},T^{-3r})&0&0\\
u_{21}J(T^{-i},T^{-3r})&u_{22}J(T^{-i-2r},T^{-r})&0&0\\
0&0&v_{11}J(T^{-i-3r},T^{-3r})&v_{12}J(T^{-i-3r},T^{-r})\\
0&0&v_{21}J(T^{-i-r},T^{-r})&v_{22}J(T^{-i-r},T^{-3r}),
\end{bmatrix}
\end{equation}
where the $u_{mn}$ and $v_{mn}$ are units of $R$.

To apply Lemma~\ref{carries} it is helpful to consider the matrix 
\begin{equation}\label{blockvals}
C=\begin{bmatrix}
c(i,r)&c(i+2r,3r)&\cdot&\cdot\\
c(i,3r)&c(i+2r,r)&\cdot&\cdot\\
\cdot&\cdot&c(i+3r,3r)&c(i+3r,r)\\
\cdot&\cdot&c(i+r,r)&c(i+r,3r)
\end{bmatrix}
\end{equation}
of the valuations of the nonzero entries of $B$. As these entries are integers
in the range $[0,2t]$, we can apply (\ref{lowerbound}) with $X={\mu_L}_{\vert M_i}$,
$X'=B$ and $s=2t$ to obtain $v_p(\kappa({\mu_L}_{\vert M_i})))\geq v_p(\kappa(B))$.
By  Lemma~\ref{carries}(iii), the diagonal sum of each $2\times 2$ block
is equal to the anti-diagonal sum. It follows that 
\begin{equation}\label{valLMi}
v_p(\kappa(B))\geq c(i,r)+c(i+r,3r)+c(i+2r,r)+c(i+3r,3r)=4t.
\end{equation}
where the last equality is by Lemma~\ref{carries}(ii).

Suppose that equality holds. Then the determinants of
each $2\times 2$ block of $B$  must have $p$-adic valuations
exactly equal to the sums of the corresponding diagonals (or anti-diagonals)
in $C$. Once  the $p$-adic valuation of determinant
of a $2\times 2$ matrix is known, then its  $p$-elementary divisors
will be determined by the smallest among the $p$-adic valuations of its  entries.
This shows  that the $p$-elementary divisors of $B$ 
will be determined by the minimum valuation of an entry in each of the two blocks.  
However, we can say more, since it also  follows from the definitions
of $c(i)$ and the fact that $s(r)=s(3r)=t(p-1)$
that the each entry of the upper block of $C$
can be obtained from corresponding  entry of the lower block by adding
$\frac{1}{p-1}(s(i)-s(i+r)+s(i+2r)-s(i+3r))$.  Thus, the lowest $p$-adic valuations
of entries occur in the same position in the two blocks. It follows that,
if the lowest $p$-adic valuation occurs for a diagonal entry, then
the $p$-elementary divisors of $B$ are
\begin{equation*}
p^{c(i,r)},\quad p^{c(i+2r,r)},\quad p^{c(i+r,3r)},\quad p^{c(i+3r,3r)},
\end{equation*}
while if the lowest $p$-adic valuation occurs for an  anti-diagonal entry, then
the $p$-elementary divisors of $B$ are
\begin{equation*}
p^{c(i+r,3r)},\quad p^{c(i+2r,3r)},\quad p^{c(i+r,r)},\quad p^{c(i+3r,r)}.
\end{equation*}
We conclude that, under the assumption of equality in (\ref{valLMi}),
the $p$-elementary divisors of $B$ are determined by the smallest
$p$-adic valuation of an entry. 

The matrix $2{\mu_L}_{\vert M_0}$ is
\begin{equation}\label{Mzero}
\begin{bmatrix}
0&-1&\alpha&0&\overline\alpha\\
0&q&-q\alpha&0&-q\overline\alpha\\
0&-\overline\alpha&q&-\alpha J(T^{-2r},T^{-3r})&0\\
0&0&-\overline\alpha J(T^{-r},T^{-r})&q&-\alpha J(T^{-3r},T^{-3r})\\
0&-\alpha&0&-\overline\alpha J(T^{-2r},T^{-r})&q
\end{bmatrix}
\end{equation}
Modulo $q$, it is $R$-equivalent to 
\begin{equation}\label{Mzero1}
\begin{bmatrix}
0&0&0&0&0\\
0&-1&\alpha&0&\overline\alpha\\
0&-\overline\alpha&0&-\alpha J(T^{-2r},T^{-3r})&0\\
0&0&-\overline\alpha J(T^{-r},T^{-r})&0&-\alpha J(T^{-3r},T^{-3r})\\
0&-\alpha&0&-\overline\alpha J(T^{-2r},T^{-r})&0
\end{bmatrix}.
\end{equation}
The lower $4\times 4$ submatrix
\begin{equation}\label{Mzero2}
\begin{bmatrix}
-1&\alpha&0&\overline\alpha\\
-\overline\alpha&0&-\alpha J(T^{-2r},T^{-3r})&0\\
0&-\overline\alpha J(T^{-r},T^{-r})&0&-\alpha J(T^{-3r},T^{-3r})\\
-\alpha&0&-\overline\alpha J(T^{-2r},T^{-r})&0
\end{bmatrix}
\end{equation}
can be reduced by elementary row and column operations to
\begin{equation}\label{Mzero3}
\begin{bmatrix}
1&0&0&0\\
0&\overline\alpha\alpha&\alpha J(T^{-2r},T^{-3r})&\overline\alpha^2\\
0&\overline\alpha J(T^{-r},T^{-r})&0&\alpha J(T^{-3r},T^{-3r})\\
0&\alpha^2&\overline\alpha J(T^{-2r},T^{-r})&\overline\alpha\alpha
\end{bmatrix}
\end{equation}
and the lower $3\times 3$ block cam be further reduced to
\begin{equation}\label{Mzero4}
\begin{bmatrix}
\alpha\overline\alpha&\alpha J(T^{-2r},T^{-3r})&\overline\alpha^2\\
0&-\alpha J(T^{-r},T^{-r})J(T^{-2r},T^{-3r})&\alpha^2J(T^{-3r},T^{-3r})-\overline\alpha^2J(T^{-r},T^{-r})\\
0&\overline\alpha^2 J(T^{-2r},T^{-r})-\alpha^2 J(T^{-2r},T^{-3r})&0
\end{bmatrix}.
\end{equation}
Since $c(r,r)=c(2r,3r)=c(2r,r)=c(3r,3r)=t$, we see from this last
matrix form that $v_p(\kappa({\mu_L}_{\vert M_0}))\geq 2t$, with equality if
and only the $p$-elementary divisors are $0$, $1$, $1$, $p^t$ and $p^t$.

Combining our bounds for $v_p(\kappa({\mu_L}_{\vert M_0}))$ and 
$v_p(\kappa({\mu_L}_{\vert M_i}))$ we see that 
\begin{equation}
v_p(\kappa(\mu_L))=v_p(\kappa({\mu_L}_{\vert M_0}))+\sum_{i=1}^{\frac{q-5}{4}}\geq
2t+{\frac{q-5}{4}}4t=(q-3)t= v_p(\kappa(\mu_L)),
\end{equation}
where the last equality is from Theorem~\ref{critgp}. Thus, all of
our inequalities are equalities and the theorem is proved.
\end{proof}

\begin{corollary}\label{palindromic} Let $m(i)$ denote the multiplicity
of $p^i$ as a $p$-elementary divisor of $L$. Then for $1\leq i\leq 2t-1$ 
we have $m(i)=m(2t-i)$, and  $m(0)=m(2t)+2$.
\end{corollary}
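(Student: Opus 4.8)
The plan is to exhibit an involution on the index set $\{0,1,\dots,q-2\}$ which, at the level of $p$-elementary divisors, realises the reflection $c\mapsto 2t-c$. I would take $\iota(j)=q-1-j$. Since $q-1=4r$, the map $\iota$ fixes $2r$, interchanges $r$ and $3r$, and fixes $0$, so it preserves the set of indices $\{0,r,2r,3r\}$ handled by $M_0$. On a generic coset it sends $\{i,i+r,i+2r,i+3r\}$ to $\{q-1-i,q-1-(i+r),q-1-(i+2r),q-1-(i+3r)\}$, which is again one of the cosets indexing a summand in \eqref{directsum}. The aim is to prove that if $E_i$ denotes the multiset of exponents $c$ for which $p^c$ is an elementary divisor of $(\mu_L)_{\vert M_i}$, then $E_{\iota(i)}=2t-E_i$.

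The first ingredient I would establish is an entrywise complementation of carries. Because $q-1=p^{2t}-1$ has all its $p$-digits equal to $p-1$, the digit expansion of $q-1-x$ is the digitwise complement of that of $x$, so $s(q-1-x)=2t(p-1)-s(x)$. Feeding this into $c(a,b)=\frac{1}{p-1}(s(a)+s(b)-s(a+b))$ and using $s(r)=s(3r)=t(p-1)$ together with $-r\equiv 3r\pmod{q-1}$ gives
\begin{equation*}
c(x,r)+c(q-1-x,3r)=2t,\qquad c(x,3r)+c(q-1-x,r)=2t,
\end{equation*}
which is the substance of Lemma~\ref{carries}(i). Comparing the valuation matrix \eqref{blockvals} for the coset $\iota(i)$ with that for $i$, these identities show that every nonzero entry of the matrix \eqref{blockform} for $M_{\iota(i)}$ has valuation equal to $2t$ minus the valuation of the entry in the corresponding position for $M_i$.

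Next I would exploit the block structure. The matrix \eqref{blockform} is a direct sum of two $2\times 2$ blocks, and by Lemma~\ref{carries}(iii) the two diagonal entries and the two anti-diagonal entries of each block have valuations summing to the same integer $D_k$, which, by the equalities established in Theorem~\ref{main}, is the valuation of that block's determinant. Hence the four entry valuations of a block fall into two pairs of equal sum $D_k$, which forces $\min+\max=D_k$; consequently the two exponents contributed by the block are $\{m_k,\,D_k-m_k\}$, where $m_k$ is the minimum entry valuation. For the corresponding block of $M_{\iota(i)}$ each entry valuation is $2t$ minus the one for $M_i$, so its determinant valuation is $4t-D_k$ and its minimum entry valuation is $2t-\max=2t-(D_k-m_k)$; its two exponents are therefore $2t-\{m_k,D_k-m_k\}$. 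Summing over the two blocks yields $E_{\iota(i)}=2t-E_i$. The very same computation applies when $\iota$ fixes the coset, in which case it forces $E_i=2t-E_i$, i.e.\ $E_i$ is itself symmetric under $c\mapsto 2t-c$.

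Pairing each coset with its image under $\iota$, the combined contribution of each pair, namely $E_i\cup E_{\iota(i)}=E_i\cup(2t-E_i)$, as well as that of every self-paired coset, is invariant under $c\mapsto 2t-c$; hence among all the $M_i$ with $1\le i\le\frac{q-5}{4}$ the number of exponents equal to $j$ equals the number equal to $2t-j$, for every $j$, including $j\in\{0,2t\}$. It then remains to add the contribution of $M_0$, whose nonzero elementary divisors are $1,1,p^t,p^t$ by Theorem~\ref{main}(1): these add $2$ to $m(0)$, $2$ to $m(t)$, and nothing to $m(2t)$. The pair $p^t,p^t$ is self-symmetric, so $m(j)=m(2t-j)$ for all $1\le j\le 2t-1$, while the two units contribute the surplus giving $m(0)=m(2t)+2$. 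I expect the only real obstacle to be the block-wise duality $E_{\iota(i)}=2t-E_i$: replacing each entry by $2t$ minus itself only relates the \emph{minimum} of a block to its \emph{maximum}, so one genuinely needs the equal-sum pairing of the entries (Lemma~\ref{carries}(iii)), which is exactly what makes $\min+\max=D_k$ and lets the minimum-driven Smith normal form of each $2\times2$ block transform into that of its dual.
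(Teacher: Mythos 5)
Your argument is correct and is essentially the paper's own one-line proof written out in full: pair each component $M_i$ with $M_{q-1-i}$, use the carry identities together with Lemma~\ref{carries}(iii) to see that each $2\times 2$ block's pair of exponents $\{\min,\ \det\text{-valuation}-\min\}$ is sent to its complement in $2t$, and then account separately for $M_0$'s contribution of $1,1,p^t,p^t$. One point worth recording: the identity you actually need and correctly derive is the mixed form $c(x,r)+c(q-1-x,3r)=2t$ (equivalently $c(x,3r)+c(q-1-x,r)=2t$); the printed statement of Lemma~\ref{carries}(i), namely $c(i,r)+c(q-1-i,r)=2t$, fails already for $q=9$, $i=1$, so your restatement is the correct reading of that lemma and the corollary does follow as you describe.
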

\begin{proof} The corollary follows from Theorem~\ref{main} and
Lemma~\ref{carries}(i).
\end{proof}

We can also obtain the $p$-rank, which was first computed in  \cite[Theorem 3.4]{WQWX}.
\begin{corollary} $\rank_p L=2(3^t-1)(\frac{p+1}{4})^{2t}$
\end{corollary}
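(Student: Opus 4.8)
The plan is to identify the $p$-rank with $m(0)$, the multiplicity of $1$ among the $p$-elementary divisors of $L$, and then to count these divisors using Theorem~\ref{main}. Since $p^c$ is a unit modulo $p$ exactly when $c=0$, we have $\rank_p L=m(0)$. By Theorem~\ref{main} the component $M_0$ contributes two divisors equal to $1$ (from the list $0,1,1,p^t,p^t$), while each $M_i$ contributes the number of entries of $C_i$ that vanish; equivalently, $m(0)$ is the total number of unit $p$-elementary divisors among the $2\times2$ blocks of the matrices $B$ of (\ref{blockform}), summed over $1\le i\le\frac{q-5}{4}$.

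The first step is a structural simplification: \emph{every such $2\times2$ block has rank at most $1$ modulo $p$}. A block of rank $2$ would have $v_p(\det)=0$, and then Lemma~\ref{carries}(iii) forces all four of its carry-valuations to vanish, so that $c(x,r)=c(x,3r)=0$ and $c(x+2r,r)=c(x+2r,3r)=0$ for the base index $x$ of its $2r$-coset. But $c(x,r)=c(x,3r)=0$ forces every base-$p$ digit of $x$ to be at most $\frac{p-3}{4}$, whence, since every digit of $2r$ equals $\frac{p-1}{2}$, the sum $x+2r$ is carry-free with all digits in the range $[\frac{p-1}{2},\frac{3p-5}{4}]$; none of these is $\le\frac{p-3}{4}$, contradicting $c(x+2r,r)=0$. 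Thus no block is invertible mod $p$, each nonzero block contributes exactly one unit divisor, and $\rank_p L=2+\#\{\text{nonzero blocks}\}$. Here a block is nonzero precisely when, for one of the two indices $x,x+2r$ of its $2r$-coset, one of $J(T^{-x},T^{-r})$ or $J(T^{-x},T^{-3r})$ is a unit mod $p$, i.e.\ when $c(x,r)=0$ or $c(x,3r)=0$.

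The second step converts this into a carry count. Set $f(x)=1$ if $c(x,r)=0$ or $c(x,3r)=0$. By Stickelberger's theorem $c(x,r)=0$ means the base-$p$ addition of $x$ and $r$ is carry-free, which by the explicit digits of $r$ and $3r$ bounds the even digits of $x$ by $\frac{p-3}{4}$ and the odd digits by $\frac{3p-1}{4}$ (and symmetrically for $c(x,3r)=0$). Classifying each digit by the two thresholds $\frac{p-3}{4}$ and $\frac{3p-1}{4}$ and applying inclusion--exclusion to the two conditions gives $\#\{x: R(x)\}=3^t(\frac{p+1}{4})^{2t}$ for the condition $R(x)$ defining $c(x,r)=0$, and hence $\#\{x: f(x)=1\}=(2\cdot3^t-1)(\frac{p+1}{4})^{2t}$, up to the boundary indices $\{0,r,2r,3r\}$ and the two wrap-around exceptions $x=r,3r$ (where $x+r$ or $x+3r$ is congruent to $0$ modulo $q-1$, so the naive digit bound overcounts). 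These corrections are routine.

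The main obstacle is the final pairing count. Because a $2r$-coset $\{b,b+2r\}$ already gives a nonzero block as soon as one of $b,b+2r$ satisfies $f=1$, passing from $\#\{x:f(x)=1\}$ to $\#\{\text{nonzero cosets}\}$ requires subtracting the number of cosets both of whose elements satisfy $f=1$. Computing this quantity is the delicate part: it means tracking the carries---including the single cyclic carry modulo $q-1$---produced by the translation $b\mapsto b+2r$, where the digits of $2r$ are all $\frac{p-1}{2}$. I expect this doubly-active count to equal $(\frac{p+1}{4})^{2t}-1$, so that after the routine boundary accounting one obtains $\rank_p L=2(3^t-1)(\frac{p+1}{4})^{2t}$. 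As a consistency check, this is compatible with Corollary~\ref{palindromic}: the relation $m(0)=m(2t)+2$ matches the fact that the count of maximal carry-valuations $c=2t$ is the digit-complement of the count of $c=0$, by Lemma~\ref{carries}(i).
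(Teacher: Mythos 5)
Your strategy coincides with the paper's: identify $\rank_p L$ with the multiplicity of $1$ among the $p$-elementary divisors, take the contribution $2$ from $M_0$, show that each $2\times 2$ block of (\ref{blockform}) has $p$-rank at most $1$ (your argument is the paper's, namely Lemma~\ref{carries}(iii) together with the observation that $c(j,r)=0$ forces $c(j+2r,r)>0$ by looking at the $0$-th digit), and then count the blocks containing a unit entry by inclusion--exclusion on the conditions $c(\cdot,r)=0$ and $c(\cdot,3r)=0$. Your digit bounds and the resulting counts $3^t(\frac{p+1}{4})^{2t}$ and $(\frac{p+1}{4})^{2t}$ are correct (the paper's text states the parity of the digit bounds the other way round, but the cardinality is unaffected).

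The one step you do not actually carry out is the doubly-active count, which you only ``expect'' to equal $(\frac{p+1}{4})^{2t}-1$; since this is the decisive enumeration, it is a genuine gap rather than a routine omission. It is closed as follows. Because $c(j,r)=0$ and $c(j+2r,r)=0$ cannot hold simultaneously (likewise with $3r$), a coset $\{j,j+2r\}$ with both members active must satisfy exactly one of ``$c(j,r)=0$ and $c(j+2r,3r)=0$'' or the same condition with $j$ and $j+2r$ interchanged; hence the doubly-active cosets are in bijection with $\{i \mid c(i,r)=0 \text{ and } c(i+2r,3r)=0\}$. A short digit computation (adding $2r$, whose digits are all $\frac{p-1}{2}$, produces no carry out of any even position and is forced to produce a carry out of every odd position) shows that this condition pins each even digit of $i$ to $[0,\frac{p-3}{4}]$ and each odd digit to $[\frac{p+1}{2},\frac{3p-1}{4}]$, giving $(\frac{p+1}{4})^{2t}$ residues, exactly one of which, $i=3r$, is an excluded index --- this is the source of your ``$-1$''. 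With that in place, your boundary accounting (the spurious indices $0$, $r$, $3r$ each contribute $+1$ to the naive inclusion--exclusion and $2r$ contributes nothing) makes the arithmetic close to $2(3^t-1)(\frac{p+1}{4})^{2t}$. So: right approach, correct intermediate values, but the key count was asserted rather than proved.
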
 
\begin{proof} 
We know that the $p$-rank of ${\mu_L}_{\vert M_0}$ is $2$, so we need to count
the occurrences of $1$ as a $p$-elementary divisor in the ${\mu_L}_{\vert M_i}$
for $1\leq i\leq \frac{q-5}{4}$. 

We note that if we  swap the rows and columns of the lower block
of the matrix  $C$ in  (\ref{blockvals}) (which corresponds to
swapping the same rows and colums in (\ref{blockform}) )
we obtain a block sum of two matrices, both of the form
\begin{equation}\label{exponentmatrix}
\begin{bmatrix}
c(j,r)&c(j+2r,3r)\\
c(j,3r)&c(j+2r,r)
\end{bmatrix}
\end{equation}
for suitable $j$, and that as $j$ runs from $1$ to $\frac{q-5}{4}$,  
the entries in the blocks form the multiset 
$$
\{c(\ell,r), c(\ell,3r) \mid\, 1\leq \ell\leq q-1, \ell\neq 0,r,2r,3r \}
$$
By examining just the $0$-th $p$-digit,
it is easy to see that if $c(j,r)=0$, then $c(j+2r,r)>0$. Likewise,
if $c(j,3r)=0$, then $c(j+2r,3r)>0$.  
This means that (\ref{exponentmatrix})
has at most one zero on the diagonal and at most one zero on the anti-diagonal.
In view of Lemma\ref{carries}(iii), there can be at most
one $p$-elementary divisor equal to 1 in the corresponding block (\ref{blockform}),
and this will occur if and only if there is at least one zero
entry in (\ref{exponentmatrix}).
With these observations in hand, it is now a simple matter to count
the number of blocks with a nonzero entry by
counting the sets $\{i \mid\, c(i,r)=0\}$, $\{i \mid\, c(i,3r)=0\}$,
$\{i \mid\, c(i,r)=0 \ \text{and}\  c(i,3r)=0\}$,  and $\{i \mid\, c(i,r)=0\ \text{and}\  c(i+2r,3r)=0\}$.
The first set consists of those $i$ whose even index $p$-digits are $\leq \frac{3p-1}{4}$
and whose odd index $p$-digits are  $\leq \frac{p-3}{4}$, so this set has size
$(\frac{3(p+1)}{4})^t(\frac{(p+1)}{4})^t$. Similarly, the second set has the same size,
while the last two sets have size $(\frac{(p+1)}{4})^{2t}$. The result follows. 
\end{proof}

The following examples give an idea of the size and structure of the critical groups. 

Our first example provides an alternative proof to
the one in \cite{Peisert} that $P^*(9^2)$ and $\Paley(9^2)$ are not isomorphic.

\begin{example} Let $q=9^2$. Then from \cite{CSXPaley}, we have
\begin{equation*}
K(\Paley(9^2))\cong (\Z/20\Z)^{40}\oplus[(\Z/3Z)^{16}\oplus(\Z/9\Z)^{18}\oplus(\Z/27\Z)^{16}\oplus(\Z/81\Z)^{14}],
\end{equation*}
while the results of the present paper show
\begin{equation*} 
K(P^*(9^2))\cong (\Z/20\Z)^{40}\oplus[(\Z/3Z)^{20}\oplus(\Z/9\Z)^{10}\oplus(\Z/27\Z)^{20}\oplus(\Z/81\Z)^{14}].
\end{equation*}
\end{example}

Using Theorem~\ref{main} we can compute the critical groups
of larger examples than would be possible by working directly with the Laplacian matrix.

\begin{example} The critical group $K(P^*(3^{12}))$ is isomorphic to
\begin{equation*}
\begin{aligned} 
(\Z/132860\Z)^{265720}&\oplus[(\Z/{3}\Z)^{11376}\oplus (\Z/{3^2}\Z)^{33408}
\oplus (\Z/{3^3}\Z)^{54176}\oplus(\Z/{3^4}\Z)^{66852}\\
&\oplus(\Z/{3^5}\Z)^{66420}\oplus(\Z/{3^6}\Z)^{64066}
\oplus (\Z/{3^7}\Z)^{66420}\oplus(\Z/{3^8}\Z)^{66852}\\
&\oplus(\Z/{3^9}\Z)^{54176}\oplus(\Z/{3^{10}}\Z)^{33408}\oplus(\Z/{3^{11}}\Z)^{11376}\oplus(\Z/{3^{12}}\Z)^{1454}].
\end{aligned}
\end{equation*}
\end{example}

\section{ Paley and Peisert graphs with over fields of order $p^2$, with
$p\equiv 3\pmod 4$.}
Assume that $q=p^{2t}$, $p\equiv 3\pmod4$. In this section we shall use
$A(q)$ to denote the adjacency matrix of $\Paley(q)$ and $A^*(q)$ 
to denote the adjacency matrix of $P^*(q)$, both with respect to
some arbitrary but fixed ordering on their common vertex set $\Fq$.
The graphs $\Paley(q)$ 
and $P^*(q)$ are cospectral, but in the special
case when $q=p^2$, we shall show that they are even more closely related.

Let $D$ be an integral domain and let $D_n$ denote the ring of $n\times n$ matrices with entries in $D$. We shall say that two matrices $A$ and $B$ in $D_n$ are {\it similar over $D$}
if, and only if, there is an invertible element $C$ of $D_n$ such that $CAC^{-1}=B$. 
If $P$ is a prime ideal of $D$, we denote be $D_P$ the localization of $D$ at $P$.

\begin{theorem}\label{similar} Assume $q=p^{2}$ with $p\equiv 3\pmod4$.
\begin{enumerate}
\item[(i)] $A(q)$ and $A^*(q)$ are similar over the ring  of algebraic integers
in some number field.
\item[(ii)] $A(q)$ and $A^*(q)$ are similar over the $\ell$-local integers 
$\Z_{(\ell)}$ for every prime $\ell\in\Z$.
\end{enumerate}
\end{theorem}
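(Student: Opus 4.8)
The plan is to prove (i) by producing an explicit conjugating matrix over a suitable cyclotomic ring of integers, and then to deduce (ii) from (i) by a purely local descent carried out one rational prime at a time. The guiding observation is that for $q=p^2$ the eigenvalues $k=\frac{q-1}{2}$, $r=\frac{p-1}{2}$, $s=\frac{-p-1}{2}$ are rational integers, so $A(q)$ and $A^*(q)$ are already similar over $\Q$; the issue is entirely integral. Both $A(q)$ and $A^*(q)$ are Cayley matrices on the additive group of $\Fq=\mathbb{F}_{p^2}$, hence are simultaneously diagonalized by the additive-character matrix $F$ with entries $\psi(ax)=\zeta_p^{\Tr(ax)}\in\Z[\zeta_p]$. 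First I would record the eigenvalue assignments: the eigenvalue of a Cayley matrix at the character $\psi_a$ is the character sum $\sum_{s}\psi(as)$, and for $q=p^2$ (the semiprimitive case, where the relevant quartic Gauss sums are rational of absolute value $p$) this equals $k$ at $a=0$ and equals $r$ or $s$ according to a pairing of the four quartic cosets $C_0,C_1,C_2,C_3$; the pairing for $P^*(q)$ differs from that for $\Paley(q)$, but in both cases each of $r$ and $s$ is attained on a set of $\frac{q-1}{2}$ characters. Since the two graphs are not isomorphic (for $p>3$), this reassignment is \emph{not} induced by any $\mathbb{F}_p$-linear bijection of $\Fq$, so no vertex permutation conjugates one matrix to the other and a genuine non-monomial conjugator is required.

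The conjugators from $A(q)$ to $A^*(q)$ are exactly the matrices $C=F^{-1}\tilde C F$ whose Fourier transform $\tilde C$ is block-structured, carrying each eigenspace of $A(q)$ onto the eigenspace of $A^*(q)$ with the same eigenvalue (the transporter) and fixing the line spanned by $\allone$. Because matching eigenspaces have equal dimension, such $\tilde C$ exist over $\C$ in abundance; the whole difficulty is to choose $\tilde C$ so that the factor $\frac1q$ in $F^{-1}=\frac1q F^*$ cancels, making $C$ a matrix over the ring of integers $\mathcal O=\Z[\zeta_{8p}]$ (or a smaller cyclotomic ring containing $\zeta_p$ and $i$) that is moreover invertible there. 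This integrality at the prime $p$ is the main obstacle of the whole theorem, and it is precisely here that the hypothesis $q=p^2$ (rather than $p^{2t}$) and the concrete model $\mathbb{F}_{p^2}=\mathbb{F}_p(i)$ of the quartic-coset structure enter: I would build $\tilde C$ out of the quartic character so that its inverse transform collapses to an algebraic-integer matrix with unit determinant, and I would arrange in addition that $C\allone=\allone$ and $\allone^{\top}C=\allone^{\top}$, which is possible since $\allone$ spans the common $k$-eigenspace.

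Granting (i), I would deduce (ii) by descent. Fix a rational prime $\ell$, and from (i) take $C\in \mathrm{GL}_n(\mathcal O)$ with $CA(q)C^{-1}=A^*(q)$. Localizing, $A(q)$ and $A^*(q)$ are similar over $\mathcal O_{\mathfrak l}$ for any prime $\mathfrak l\mid\ell$ of $\mathcal O$, hence over the complete discrete valuation ring $\hat{\mathcal O}_{\mathfrak l}$, a finite extension of $\Z_\ell$. Reading similarity as isomorphism of the $\Z_\ell[x]/(m(x))$-lattices $\Z_\ell^{\,n}$ on which $x$ acts by $A(q)$ and by $A^*(q)$, where $m(x)=(x-k)(x-r)(x-s)$ is the common minimal polynomial, the Noether--Deuring theorem for lattices over an order over a complete discrete valuation ring descends the isomorphism from $\hat{\mathcal O}_{\mathfrak l}$ to $\Z_\ell$; finally, isomorphism of lattices over the discrete valuation ring $\Z_{(\ell)}$ is detected by their $\Z_\ell$-completions, giving similarity over $\Z_{(\ell)}$ itself. (At the primes $\ell\nmid 2p(q-1)$ this is transparent anyway, since the idempotents $\frac1q J$, $\frac{(A-kI)(A-sI)}{(r-k)(r-s)}$ and $\frac{(A-kI)(A-rI)}{(s-k)(s-r)}$ are already integral over $\Z_{(\ell)}$ and split the module into free pieces of equal rank; the substance is at $\ell=p$, where all three eigenvalues collapse mod $p$.) The conceptual point is that a global obstruction may force the extension to $\mathcal O$ in (i), yet this obstruction vanishes locally at every rational prime, which is exactly (ii). Since the conjugators may be taken to fix $\allone$, they commute with $J$, so $aA(q)+bI+cJ$ and $aA^*(q)+bI+cJ$ are $\Z_{(\ell)}$-similar for all integers $a,b,c$ and all $\ell$; this yields the stated cospectrality and the equality of Smith normal forms for every generalized adjacency matrix.
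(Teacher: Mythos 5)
There is a genuine gap, and it sits exactly where you locate ``the main obstacle of the whole theorem'': the similarity at the prime $p$. Everything else in your proposal is either correct and routine (for $\ell\neq p$ the character table $X$ of $(\Fq,+)$ is invertible over $\Z[\zeta_p][\frac1p]$ and $X^{-1}PX$ conjugates $A(q)$ to $A^*(q)$ --- this is precisely the paper's Lemma~\ref{ellsimilar}) or standard descent machinery (your Noether--Deuring/completion argument for deducing (ii) is in substance the Reiner--Zassenhaus/Guralnick local-global principle, Theorem~\ref{local-global}, which the paper cites). But at $p$ you write only that you ``would build $\tilde C$ out of the quartic character so that its inverse transform collapses to an algebraic-integer matrix with unit determinant.'' No such $\tilde C$ is exhibited, and no reason is given why one exists. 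Any $\tilde C$ that is a permutation of characters induced by a multiplicative map would make the graphs isomorphic, which they are not for $p>3$; so a genuinely non-monomial block conjugator is needed, and its $p$-integrality is a delicate arithmetic condition on character sums, not a formality. This is the entire content of the paper's Section~5 after Lemma~\ref{ellsimilar}: the decomposition of $R^{\Fq}$ into $C_0$-isotypic components $M_i$, the explicit matrices $K_i$ and $K_i^*$ of $2A+I$ and $2A^*+I$ in terms of Jacobi sums, the evaluations $J(r,r)=J(3r,3r)=J(r,2r)=J(3r,2r)=p$ and the identity $J(i,r)J(i+r,r)=J(i,3r)J(i+3r,3r)$ (Lemma~\ref{berndt}, which is where the hypothesis $q=p^2$ is actually used), the Stickelberger digit-and-carry analysis of $p$-ranks, and the explicit new bases of each $M_i$ reducing both $K_i$ and $K_i^*$ to one of the three canonical forms in (\ref{canon}). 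None of this is present in your proposal, and without it the theorem is not proved.

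Two smaller points. First, the paper's architecture is the reverse of yours: it proves similarity \emph{locally at every prime} of a cyclotomic ring (hard only at $p$, over the local ring $R$) and then invokes the local-global theorem twice --- once with $D$ the ring of integers of $\Q(\zeta,\xi)$ to obtain (i), and once with $D=\Z$ to obtain (ii). This sidesteps the need for the single globally integral conjugator of unit determinant that your plan requires; note that Guralnick's theorem only guarantees similarity over \emph{some} finite integral extension, so your claim that one may take $\mathcal O=\Z[\zeta_{8p}]$ specifically is an additional unproven assertion. Second, your closing remarks about conjugators fixing $\allone$ and hence commuting with $J$ belong to the corollary on generalized adjacency matrices, not to the theorem itself; the paper obtains the commutation with $J$ from the strongly-regular-graph equation rather than from a normalization of the conjugator.
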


Before giving the proof of Theorem~\ref{similar} we discuss its implications.
Since the Smith normal form of a matrix is determined
locally, that is, one prime at a time, any two matrices that satisfy
the equivalent conditions of Theorem~\ref{similar} have the same
Smith normal form. 

By Theorem~\ref{similar}, it is immediate that for any $a$, $b\in \Z$ 
the matrices $aA(q)+bI$ and $aA^*(q)+bI$ are cospectral and have the same
Smith normal form. Since $\Paley(q)$ and $P^*(q)$ are strongly regular
graphs with the same parameters $(k,\lambda,\mu)=(\frac{q-1}{2},\frac{q-5}{4},\frac{q-1}{4})$,
 the equation 
\begin{equation}
A^2+(\mu-\lambda)A+(\mu-k)I=\mu J,
\end{equation}
satisfied by both $A(q)$ and $A^*(q)$, implies that any matrix $C$
with $CA(q)C^{-1}=A^*(q)$ must commute with $J$ and therefore transforms 
the generalized adjacency matrix $aA(q)+bI+cJ$ to  $aA^*(q)+bI+cJ$ for any $a$, $b$ and $c$. 
We arrive at the following corollary.

\begin{corollary} Let $q=p^2$, $p\equiv 3\pmod4$.
For any integers $a$, $b$ and $c$, the generalized adjacency matrices
$aA(q)+bI+cJ$ and $aA^*(q)+bI+cJ$ are cospectral and have the same
Smith normal forms.\qed
\end{corollary}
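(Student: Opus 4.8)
The plan is to establish the local statement (ii) at every prime $\ell$ and then assemble the local data into the global statement (i); since the Smith normal form is computed one prime at a time, (ii) already delivers the Smith‑equivalence used in the corollary, so (ii) is really the heart of the matter. The key simplification from $q=p^2$ is that the eigenvalues $k=\frac{q-1}{2}$, $r=\frac{-1+p}{2}$, $s=\frac{-1-p}{2}$ are all integers, so $A(q)$ and $A^*(q)$ are semisimple integer matrices with equal characteristic polynomial and are similar over $\Q$; both also commute with $J$ and fix $\allone$, on which they act by $k$. I would therefore first dispose of $\Q$‑similarity and reduce everything to a study of the $\Z_{(\ell)}[x]$‑modules these two matrices define.

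For a prime $\ell\neq p$ the argument is short, and I would run it first to isolate the difficulty. Here $q$ and $r-s=p$ are units in $\Z_{(\ell)}$, so the surjection $\Z_{(\ell)}^{\Fq}\to\Z_{(\ell)}$, $[x]\mapsto1$, splits $A$‑invariantly, giving $\Z_{(\ell)}^{\Fq}=\Z_{(\ell)}\allone\oplus W$ with $W$ the sum‑zero sublattice; on $W$ one has $(A-r)(A-s)=0$, and since $r-s$ is a unit the idempotent $\frac{A-s}{r-s}$ is integral and splits $W=W_r\oplus W_s$ into free eigenlattices of ranks $\frac{q-1}{2}$. The same holds for $A^*(q)$ with identical ranks, so matching summands yields $A(q)\sim_{\Z_{(\ell)}}A^*(q)$ for every $\ell\neq p$.

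The prime $\ell=p$ is where the real work lies and is the step I expect to be the main obstacle, because now $k\equiv r\equiv s\equiv-\tfrac12\pmod p$: the eigenvalues collapse modulo $p$, the index $[\,\Z_{(p)}^{\Fq}:\Z_{(p)}\allone\oplus W\,]=q$ is no longer a unit, and the splitting above fails. The plan is to exploit that $\abs{C_0}=\frac{q-1}{4}$ is prime to $p$, so over the unramified extension $R$ of Section~\ref{p-part} the multiplicative $C_0$‑action produces a genuine direct‑sum decomposition $R^{\Fq}=M_0\oplus\bigoplus_i M_i$ respected by both $A(q)$ and $A^*(q)$. For $i\neq0$ each $M_i$ lies in the sum‑zero part, so $A$ has minimal polynomial $(x-r)(x-s)$ there and $M_i$ is a lattice over $\mathcal O=R[x]/((x-r)(x-s))$; because $r-s=p$ has valuation $1$, $\mathcal O$ is a Bass order in $R\times R$ with integral closure $\widetilde{\mathcal O}$, so every such lattice is a sum of copies of $\mathcal O$ and $\widetilde{\mathcal O}$ and its class is pinned by a single integer, the number of cyclic summands. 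That integer is visible from the mod‑$p$ reduction, equivalently from the $p$‑adic valuations of the Jacobi sums occupying each block; I would then verify that the Paley and Peisert totals agree by a valuation count of exactly the combinatorial nature of the elementary‑divisor computation in Theorem~\ref{main}, driven by Lemma~\ref{carries} and Stickelberger's theorem. The nonsplit gluing of $\allone$ is confined to the single block $M_0$, which for $q=p^2$ is a fixed small matrix whose $\Z_{(p)}$‑similarity I would check by hand. Descending the resulting $R$‑similarity to $\Z_{(p)}$ along the unramified extension $R/\Z_{(p)}$ — which is legitimate because the distinguishing invariants (mod‑$p$ Jordan type and lattice class) are stable under the separable residue extension $\Fq/\F_p$ — then finishes (ii) at $p$.

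For (i) I would argue that $A(q)\sim_{\Z_{(\ell)}}A^*(q)$ for all $\ell$, together with $\Q$‑similarity, says exactly that the $\Z[x]$‑modules attached to $A(q)$ and $A^*(q)$ lie in one genus; a genus has finitely many classes, and the obstruction to their coincidence lives in a class‑group attached to the centralizer algebra $\Q[A]$, which becomes trivial after extending scalars to the ring of integers $\mathcal O_K$ of a large enough number field, producing an invertible conjugator over $\mathcal O_K$. The corollary then follows from the discussion preceding it, since any conjugator must commute with $J$. To summarize, the genuinely hard input is the $\ell=p$ analysis — converting the Smith/$p$‑rank information into the Bass‑order lattice count and controlling the nonsplit $\allone$‑extension inside $M_0$ — while the unramified descent $R\to\Z_{(p)}$ and the genus‑collapse for (i) are the secondary technical points.
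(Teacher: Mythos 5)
Your overall architecture is sound and matches the paper's in broad outline --- reduce to prime-by-prime similarity of $A(q)$ and $A^*(q)$, observe that any conjugator commutes with $J$ (the paper makes this explicit via the strongly regular graph equation $A^2+(\mu-\lambda)A+(\mu-k)I=\mu J$, which exhibits $J$ as a polynomial in $A$), and assemble the local statements globally --- but several of your technical choices genuinely differ. At $\ell\neq p$ the paper conjugates by the character table of $(\Fq,+)$, which is invertible over $\Z[\zeta][\tfrac1p]$, and matches eigenvalues by a permutation; your idempotent $\tfrac{A-s}{r-s}$ splitting $W$ into free eigenlattices is more elementary and lands directly in $\Z_{(\ell)}$ without passing through cyclotomic extensions. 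At $\ell=p$ the paper uses the same $C_0$-isotypic decomposition $R^{\Fq}=M_0\oplus\bigoplus_iM_i$ that you propose, but instead of invoking the classification of lattices over the Bass order $R[x]/((x-p)(x+p))$ it exhibits explicit new bases of each $M_i$ realizing one of three canonical forms, the form being determined by the $p$-rank of the block; your observation that the lattice class of each block is pinned by a single integer equal to that $p$-rank is the same invariant in different clothing, and Krull--Schmidt over the completion would even let you match totals rather than blocks. For the global assembly the paper cites Guralnick's local-global theorem (applied twice, with $D=\mathcal{O}_{\Q(\zeta,\xi)}$ and $D=\Z$) where you invoke genus theory and a Noether--Deuring-style descent along the unramified extension $R/\Z_{(p)}$; these are essentially the same results of Reiner--Zassenhaus, Taussky and Dade.

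The genuine gap is that the decisive verification at $\ell=p$ is only announced, not performed. The paper's proof spends most of its effort on exactly the two points you defer: (a) the lemma that $\rank_p(K_i^*)=\rank_p(K_i)$ for every $i$, proved by a delicate case analysis on the base-$p$ digits $(i_0,i_1)$ and the carries generated when adding $r=(\tfrac{3p-1}{4},\tfrac{p-3}{4})$, $2r$, $3r$ (this is where the hypothesis $t=1$ is actually used --- the statement fails for general $q=p^{2t}$, as the critical groups then differ); and (b) the $M_0$ block, which is a $5\times5$ matrix over $R$ whose entries are Jacobi sums, handled via the identities $J(r,r)=J(3r,3r)=J(r,2r)=J(3r,2r)=p$ and $J(i,r)J(i+r,r)=J(i,3r)J(i+3r,3r)$ from Berndt--Evans, not a fixed integer matrix one can check ``by hand'' without those inputs. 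Saying the count is ``driven by Lemma~\ref{carries} and Stickelberger's theorem'' correctly names the tools but does not establish that the Paley and Peisert valuation patterns produce the same $p$-rank in each block; until that digit analysis (or an equivalent citation of the $p$-rank computations for both families) is carried out, the proof is incomplete at its central step.
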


We now turn to the proof of Theorem~\ref{similar}.
We shall make use of the following ``local-global'' theorem of Guralnick \cite[Theorem 7]{Guralnick}, based on results of Reiner-Zassenhaus \cite{RZ}, Taussky \cite{Taussky} and Dade \cite{Dade}. 

\begin{theorem}\label{local-global}Let $D$ be the ring of integers is a finite extension of $\Q$.
Suppose $A$, $B\in D_n$. Then the following are equivalent.
\begin{enumerate}
\item[(i)] $A$ and $B$ are similar over $D_P$ for each prime ideal $P$ of $D$.
\item[(ii)]$A$ and $B$ are similar over some finite integral extension of $D$.
\end{enumerate}\qed
\end{theorem}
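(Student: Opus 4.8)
The plan is to translate the similarity problem into the language of lattices over an order and then invoke the genus theory of integral representations. Since similarity over any ring containing $D$ forces $A$ and $B$ to have the same characteristic polynomial $f$, I may assume $\chi_A=\chi_B=f$ and set $\mathcal{A}=K[x]/(f)$, a finite-dimensional commutative $K$-algebra (here $K=\mathrm{Frac}(D)$), together with the order $\Lambda=D[x]/(f)$. Each matrix $A$ makes $D^n$ into a $\Lambda$-lattice $M_A$ (finitely generated and $D$-free of rank $n$), and for any $D$-algebra $R$ one has $A\sim B$ over $R$ if and only if $M_A\otimes_D R\cong M_B\otimes_D R$ as $\Lambda\otimes_D R$-modules. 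In this dictionary, condition (i) says precisely that $M_A$ and $M_B$ lie in the same \emph{genus} (they become isomorphic after localizing at every prime $P$ of $D$), while (ii) says that they become isomorphic after a finite integral base change. The theorem is thus an instance of the principle that genus-equivalent lattices become isomorphic over a suitable finite extension.

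For the implication (ii)$\Rightarrow$(i) I would argue one prime $P$ at a time and pass to completions. If $M_A\otimes_D S\cong M_B\otimes_D S$ for a finite integral extension $S$ with fraction field $L$, then completing at a prime $\mathfrak q$ of $S$ lying over $P$ yields an isomorphism of $\Lambda\otimes\widehat{S_{\mathfrak q}}$-lattices, where $\widehat{S_{\mathfrak q}}$ is a finite extension of the complete discrete valuation ring $\widehat{D_P}$. Over complete discrete valuation rings, lattices satisfy the Krull--Schmidt--Azumaya theorem, and the integral Noether--Deuring theorem then lets me descend the isomorphism from $\widehat{S_{\mathfrak q}}$ to $\widehat{D_P}$, giving $\widehat{M_A}\cong\widehat{M_B}$ over $\widehat{D_P}$. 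Finally, the standard equivalence for lattices over a Dedekind domain -- that isomorphism over the localization $D_P$ is the same as isomorphism over its completion $\widehat{D_P}$ -- returns $M_A\otimes D_P\cong M_B\otimes D_P$. Running this for every $P$ gives (i).

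The substantive direction is (i)$\Rightarrow$(ii), which is where genus theory does the real work. By the Jordan--Zassenhaus theorem the genus of $M_A$ contains only finitely many isomorphism classes, and these classes are organized by a finite class-group invariant: using the adelic description of the genus one writes the classes as a double-coset space of the idele group of $\mathcal{A}$, whose relevant component group is the locally free class group $\mathrm{Cl}(\Lambda)$ (equivalently a Picard group of the order). Under this description the failure of $M_A\cong M_B$ is measured by a single obstruction class $\omega\in\mathrm{Cl}(\Lambda)$, and base change along $D\to S$ induces a homomorphism $\mathrm{Cl}(\Lambda)\to\mathrm{Cl}(\Lambda\otimes_D S)$ carrying $\omega$ to the obstruction for $M_A\otimes S$ and $M_B\otimes S$. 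I would then take $S$ to be the ring of integers of a finite extension $L/K$ chosen to kill $\omega$: because $\mathrm{Cl}(\Lambda)$ is finite and is assembled out of ray and ideal class groups, the obstruction \emph{capitulates} in a finite extension (take $L$ containing the Hilbert class field of the relevant subfield, together with whatever splitting is needed to trivialize the reduced-norm part). Once $\omega$ maps to $0$ we obtain $M_A\otimes_D S\cong M_B\otimes_D S$, which is (ii).

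The main obstacle is the genus bookkeeping in (i)$\Rightarrow$(ii), and specifically the fact that when $f$ has repeated roots the algebra $\mathcal{A}$ is not semisimple, so $\Lambda$ is an order in a non-semisimple algebra and Jacobinski's clean genus theory does not apply verbatim. I would handle this either by reducing to the semisimple (squarefree minimal polynomial) situation -- which already covers the strongly regular case of interest, where the minimal polynomial has degree three -- or by invoking the general Jordan--Zassenhaus and genus theory valid for lattices over an arbitrary $D$-order, at the cost of replacing $\mathrm{Cl}(\Lambda)$ by the appropriate Picard-type invariant. The one genuinely arithmetic input, namely that a finite obstruction class is annihilated by a finite base extension, is exactly a capitulation statement, and it is this crux that guarantees the existence of the extension asserted in (ii).
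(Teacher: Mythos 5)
Your proposal cannot be compared with an internal argument, because the paper gives none: Theorem~\ref{local-global} is quoted verbatim, with a terminal \qed, from Guralnick \cite[Theorem 7]{Guralnick}, which in turn rests on Reiner--Zassenhaus \cite{RZ}, Taussky \cite{Taussky} and Dade \cite{Dade}. Measured against those cited proofs, your overall architecture is the standard one, and your direction (ii)$\Rightarrow$(i) is essentially complete and correct: the dictionary between similarity over a $D$-algebra $R$ and isomorphism of $M_A\otimes_D R$ and $M_B\otimes_D R$ over $\Lambda\otimes_D R$ with $\Lambda=D[x]/(f)$, followed by completion at a prime $\mathfrak q$ over $P$, Krull--Schmidt--Azumaya over the complete local ring, integral Noether--Deuring descent from $\widehat{S_{\mathfrak q}}$ to $\widehat{D_P}$ (using that $\widehat{S_{\mathfrak q}}$ is finite free over the complete discrete valuation ring $\widehat{D_P}$), and the standard equivalence between isomorphism over $D_P$ and over $\widehat{D_P}$, is exactly the Reiner--Zassenhaus half of the cited result.

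The genuine gap is at the crux of (i)$\Rightarrow$(ii). Your obstruction class $\omega\in\mathrm{Cl}(\Lambda)$ is not well defined: $M_A$ is in general not a locally free $\Lambda$-module, so the adelic double-coset description of the genus by the locally free class group of $\Lambda$ does not apply. This failure is not confined to the non-semisimple case you flag --- even when $f$ is irreducible, Latimer--MacDuffee identifies similarity classes with classes of (not necessarily invertible) $\Lambda$-ideals, which need not be locally principal at primes dividing the conductor of the order. The correct invariant attached to a pair in the same genus is the class of $\Hom_\Lambda(M_A,M_B)$, a locally free rank-one module over $E=\End_\Lambda(M_A)$, an order in a possibly noncommutative and (for $f$ with repeated roots) possibly non-semisimple algebra; and the assertion that this class is killed by a finite integral base change is not a formal ``capitulation'' statement but is precisely Dade's theorem \cite{Dade} --- that is, the substantive content of the result being proved. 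The classical mechanism (Taussky's case) is concrete and quite different from your appeal to the Hilbert class field: one represents the class by an invertible ideal $\mathfrak a$ coprime to the conductor, writes $\mathfrak a^h=(\gamma)$ with $h$ the order of the class, and adjoins $\gamma^{1/h}$, so that $\mathfrak a$ extends to a principal ideal by unique factorization in the larger Dedekind ring. Invoking the principal ideal theorem both overshoots (it is far deeper than the root-extraction trick) and undershoots: it concerns the maximal order of a field and says nothing about $\mathrm{Cl}(E)$ for the endomorphism order $E$, where your phrase ``whatever splitting is needed to trivialize the reduced-norm part'' is exactly the unproved step --- note that reduced norms control $\mathrm{Cl}(E)$ only under the Eichler condition, which is not available in general. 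Your fallback reduction to squarefree minimal polynomial does suffice for the paper's application (the adjacency matrices here satisfy a cubic with distinct roots), but it does not prove the theorem as stated, and even in the semisimple case the obstruction lives in a ray-class-group quotient attached to $E$ in Jacobinski's genus theory, not in $\mathrm{Cl}(\Lambda)$. So as written, the proposal establishes the easy direction and, for the hard one, in effect assumes the cited theorem of Dade rather than proving it.
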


\begin{lemma}\label{ellsimilar} Assume $q=p^{2t}$ with $p\equiv 3\pmod4$.
Let $\ell\neq p$ be a prime and let $\Lambda$ be a prime ideal lying over $\ell$
in the cyclotomic ring $\Z[\zeta]$, where $\zeta$ is a primitive $p$-th root of unity
in an algebraic closure of $\Q$.
Then $A(q)$ and $A^*(q)$ are similar over the localization $\Z[\zeta]_\Lambda$.
\end{lemma}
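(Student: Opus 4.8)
The plan is to exploit that both $A(q)$ and $A^*(q)$ are adjacency matrices of Cayley graphs on the additive group $(\Fq,+)$, so each is a translation-invariant matrix and hence lies in the image of the group algebra $R[(\Fq,+)]$ acting on itself by left multiplication, where $R=\Z[\zeta]_\Lambda$. Such matrices are simultaneously diagonalized by the additive character table of $\Fq$. Concretely, let $F=(\psi_a(x))_{a,x\in\Fq}$ be the $q\times q$ matrix whose $(a,x)$-entry is the value $\psi_a(x)=\zeta^{\Tr_{\Fq/\mathbb{F}_p}(ax)}$ of the additive character indexed by $a$. A direct computation with the standard eigenvector $v_a=(\psi_a(x))_{x}$ shows that $F\,A(q)\,F^{-1}$ and $F\,A^*(q)\,F^{-1}$ are the diagonal matrices whose entries are the character sums of the respective connection sets, that is, the eigenvalues of the two graphs. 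So if $F$ is invertible \emph{over $R$}, both matrices become diagonal after the same conjugation.

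The central point, and the place where the hypothesis $\ell\neq p$ is used, is precisely that $F\in GL_q(R)$. Every entry of $F$ is a $p$-th root of unity, hence lies in $\Z[\zeta]\subseteq R$, and orthogonality of characters gives
\begin{equation*}
F\,\overline{F}^{\,T}=qI,
\end{equation*}
where $\overline{F}^{\,T}$ again has entries in $\Z[\zeta]\subseteq R$. Since $\ell\neq p$, the integer $q=p^{2t}$ is a unit in the local ring $R=\Z[\zeta]_\Lambda$, so $F^{-1}=q^{-1}\overline{F}^{\,T}$ has entries in $R$ and $F$ is invertible over $R$. This integrality of $F^{-1}$ is the main obstacle: over the fraction field the simultaneous diagonalization is routine, but to obtain similarity over the \emph{local ring} one needs the change-of-basis matrix itself to be invertible over $R$, which fails at the prime $p$ and is exactly why the statement is restricted to $\ell\neq p$.

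With $F\in GL_q(R)$ in hand, the conclusion is immediate. Both $A(q)$ and $A^*(q)$ are conjugate over $R$ to diagonal matrices whose entries are the graph eigenvalues. Since $q=p^{2t}$ gives $\sqrt{q}=p^t\in\Z$, these eigenvalues $k=\frac{q-1}{2}$, $r=\frac{-1+p^t}{2}$, $s=\frac{-1-p^t}{2}$ are rational integers lying in $R$, and because $\Paley(q)$ and $P^*(q)$ are cospectral the two diagonal matrices carry the same multiset of diagonal entries. Any two diagonal matrices with the same multiset of entries are conjugate by a permutation matrix, which is invertible over any ring and in particular over $R$. Composing the three conjugations (by $F$, a permutation, and $F^{-1}$) shows that $A(q)$ and $A^*(q)$ are similar over $\Z[\zeta]_\Lambda$, as required.
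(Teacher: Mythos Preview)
Your proof is correct and follows essentially the same approach as the paper: diagonalize both Cayley-graph adjacency matrices simultaneously by the additive character table of $\Fq$, note that orthogonality makes this change-of-basis invertible once $p$ is inverted (equivalently, over $\Z[\zeta]_\Lambda$ for $\ell\neq p$), and then use cospectrality plus a permutation matrix to conclude. The paper phrases the invertibility step as working over $\Z[\zeta][\tfrac{1}{p}]\subseteq \Z[\zeta]_\Lambda$, while you argue directly that $q$ is a unit in the local ring; these are the same observation.
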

\begin{proof}
Let $X$ be the character table of $(\Fq,+)$, considered as a matrix with entries
in $\Z[\zeta]$. By the orthogonality relations $X$ is invertible over the ring
$\Z[\zeta][\frac{1}{p}]$. It has long been known (cf. \cite{McWM}) that
the adjacency matrix of an abelian Cayley graph can be transformed to
diagonal form using the character table, so
\begin{equation}
XA(q)X^{-1}=E,\qquad \text{and}\qquad  XA^*(q)X^{-1}=E^*,
\end{equation}
where $E$ and $E^*$ are the diagonal matrices of eigenvalues in some order.
Since $A(q)$ and $A^*(q)$ are cospectral, there is a permutation matrix $P$
such that $PEP^{-1}=E^*$.  Therefore, we have
\begin{equation}
(X^{-1}PX)A(q)(X^{-1}PX)^{-1}=A^*(q).
\end{equation}
We may regard this as an equation in the ring of matrices over $\Z[\zeta][\frac{1}{p}]$,
and since for every $\ell\neq p$ and any prime ideal $\Lambda$ of $\Z[\zeta]$
containing $\ell$, we have $\Z[\zeta][\frac{1}{p}]\subseteq \Z[\zeta]_\Lambda$,
the lemma is proved.
\end{proof}

In order to complete the proof of Theorem~\ref{similar} it suffices to show
that,  that $A(p^2)$ and $A^*(p^2)$ are similar over the ring  $R$ of \S\ref{p-part}. 
For then we may apply Theorem~\ref{local-global} first with $D$ being the
ring of integers in $\Q(\zeta,\xi)$ to deduce (i), and a second time  with 
with $D=\Z$ to deduce (ii).
We assume $p$ to be fixed from now on.
As similarity of $A(p^2)$ and $A^*(p^2)$ is equivalent to similarity
of $K=2A(p^2)+I$  and $K^*=2A^*(p^2)+I$,  we shall consider
the latter matrices, as they have a more convenient form.

By \cite[Lemma 3.1]{CSXPaley}, the matrix of $\mu_K$ on $M_i$ with respect to
the ordered basis $e_i$, $e_{i+2r}$, $e_{i+r}$, $e_{i+3r}$ 
is 

\begin{equation}\label{Ki}
K_i=\begin{bmatrix}
0&J(i+2r,2r)&0&0\\
J(i,2r)&0&0&0\\
0&0&0&J(i+3r,2r)\\
0&0&J(i+r,2r)&0\\
\end{bmatrix}
\end{equation}

The matrix of $\mu_{K^*}$ on $M_i$ with respect to
the ordered basis 
$e_i$, $e_{i+2r}$, $e_{i+r}$, $e_{i+3r}$ is
is
\begin{equation}\label{Kistar}
K^*_i=\begin{bmatrix}
0&0&\alpha J(i+r,3r)&\overline\alpha J(i+3r,r)\\
0&0&\overline\alpha J(i+r,r)&\alpha J(i+3r,3r)\\
\overline\alpha J(i,r)&\alpha J(i+2r,3r)&0&0\\
\alpha J(i,3r)&\overline\alpha J(i+2r,r)&0&0\\
\end{bmatrix}
\end{equation}

The matrix of $\mu_{K}$ on $M_0$ with respect to
the ordered basis $\allone$, $[0]$, $e_{2r}$, $e_r$, $e_{3r}$
is
\begin{equation}
K_0=\begin{bmatrix}
q&1&-1&0&0\\
0&0&q&0&0\\
0&1&0&0&0\\
0&0&0&0&J(3r,2r)\\
0&0&0&J(r,2r)&0
\end{bmatrix}
\end{equation}
The matrix $\mu_{K^*}$ on $M_0$ with respect to
the ordered basis $\allone$, $[0]$, $e_r$, $e_{2r}$,  $e_{3r}$
is
\begin{equation}
K^*_0=\begin{bmatrix}
q&1&-\alpha&0&-\overline\alpha\\
0&0&q\alpha&0&q\overline\alpha\\
0&\overline\alpha&0&\alpha J(2r,3r)&0\\
0&0&\overline\alpha J(r,r)&0&\alpha J(3r,3r)\\
0&\alpha&0&\overline\alpha J(2r,r)&0
\end{bmatrix}
\end{equation}

Our aim is to show that $K_i$ and $K_i^*$ are similar over $R$. 
We first dispose of the similarity of $K_0$ and $K_0^*$.

We shall need some results on Gauss and Jacobi sums over the field of
$p^2$ elements, which follow immediately from \cite[Theorem 2.12]{Berndt-Evans}
and the well known formula expressing a Jacobi sum of two characters as the product
of their  Gauss sums divided by the Gauss sum of their product character.
\begin{lemma} \label{berndt}  
\item[(i)] $J(r,r)=J(3r,3r)=J(r,2r)=J(3r,2r)=p$.
\item[(ii)]For $1\leq i\leq q-2$ and $i\notin\{r,2r,3r\}$ we have
$J(i,r)J(i+r,r)=J(i,3r)J(i+3r,3r)$.
\qed
\end{lemma}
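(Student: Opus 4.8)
The plan is to reduce both assertions to standard evaluations of Gauss sums over $\Fq$ with $q=p^2$, via the relation $J(\theta,\psi)=g(\theta)g(\psi)/g(\theta\psi)$, valid whenever $\theta$, $\psi$ and $\theta\psi$ are all nontrivial, where $g(\theta)=\sum_{x\in\Fstar}\theta(x)\phi(x)$ is the Gauss sum with respect to a fixed nontrivial additive character $\phi$ of $\Fq$ (the ratio being independent of the choice of $\phi$). Recall that here $J(i,j)$ abbreviates $J(T^{-i},T^{-j})$. Writing $\chi=T^{-r}$, the characters entering the lemma are the quartic character $\chi$, its inverse $\overline\chi=T^{-3r}=\chi^{-1}$, and the quadratic character $\chi^2=T^{-2r}$. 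Since the paper already records that $T^r(-1)=1$ (because $q\equiv 1\pmod 8$), we have $\chi(-1)=1$, so the standard identity $g(\overline\chi)=\chi(-1)\overline{g(\chi)}$ becomes $g(\overline\chi)=\overline{g(\chi)}$.

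The two inputs I would import from \cite[Theorem 2.12]{Berndt-Evans} are the following. First, $T^{-2r}$ is the lift to $\Fq$ along the norm map of the quadratic character of $\mathbb F_p$, so the Davenport--Hasse lifting relation, combined with the classical fact that the square of the quadratic Gauss sum over $\mathbb F_p$ equals $-p$ when $p\equiv 3\pmod 4$, gives $g(T^{-2r})=p$. Second, the quartic Gauss sum is real: $g(\chi)=\epsilon p$ with $\epsilon=\pm 1$. This is the semiprimitive (uniform cyclotomy) evaluation, available precisely because $p\equiv -1\pmod 4$. Combined with $g(\overline\chi)=\overline{g(\chi)}$, realness yields $g(\overline\chi)=g(\chi)=\epsilon p$, and in particular $g(\chi)^2=g(\overline\chi)^2=p^2$.

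For part (i) I would simply expand each Jacobi sum, using $\chi^2=T^{-2r}$ and $\chi^3=\overline\chi$:
\[
J(r,r)=\frac{g(\chi)^2}{g(T^{-2r})}=p,\qquad J(3r,3r)=\frac{g(\overline\chi)^2}{g(T^{-2r})}=p,
\]
\[
J(r,2r)=\frac{g(\chi)\,g(T^{-2r})}{g(\overline\chi)}=p,\qquad J(3r,2r)=\frac{g(\overline\chi)\,g(T^{-2r})}{g(\chi)}=p,
\]
the factor $\epsilon$ cancelling each time (either as $\epsilon^2=1$ or in a ratio). For part (ii), set $\rho=T^{-i}$; by the hypothesis $i\notin\{r,2r,3r\}$ together with $1\le i\le q-2$, the characters $\rho$, $\rho\chi$, $\rho\overline\chi$ and $\rho\chi^2$ are all nontrivial, so every Gauss--Jacobi expansion below is legitimate. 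Telescoping the factor $g(\rho\chi)$ (resp.\ $g(\rho\overline\chi)$) gives
\[
J(i,r)\,J(i+r,r)=\frac{g(\rho)\,g(\chi)^2}{g(\rho\chi^2)},\qquad J(i,3r)\,J(i+3r,3r)=\frac{g(\rho)\,g(\overline\chi)^2}{g(\rho\overline\chi^{2})}.
\]
Because $\chi^2$ has order $2$ we have $\overline\chi^{2}=\chi^{2}=T^{-2r}$, so the two denominators coincide, and the equality $g(\chi)^2=g(\overline\chi)^2$ from the previous paragraph finishes the proof.

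The main obstacle is entirely concentrated in the second imported fact, the realness of $g(\chi)$. The elementary relation $g(\chi)g(\overline\chi)=\chi(-1)q$ only yields $\abs{g(\chi)}=p$, which leaves open the possibility $g(\chi)=\pm ip$; in that case $J(r,r)$ would equal $-p$ rather than $p$, and the telescoped identity in (ii) would fail as well. It is precisely the semiprimitive structure ($p\equiv -1\pmod 4$, so that the Frobenius automorphism carries $\chi$ to $\overline\chi$) that forces $g(\chi)$ to be the real quantity $\pm p$, and this is the content I would quote from \cite{Berndt-Evans}.
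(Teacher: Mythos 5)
Your proposal is correct and follows essentially the same route as the paper, which gives no detailed argument but simply cites \cite[Theorem 2.12]{Berndt-Evans} for the relevant Gauss sum evaluations over $\mathbb{F}_{p^2}$ together with the standard expression of a Jacobi sum as a quotient of Gauss sums. Your write-up is a faithful expansion of that one-line citation: the telescoping in (ii) and the observation that the sign $\epsilon$ of the quartic Gauss sum cancels throughout are exactly the computations the paper leaves implicit.
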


Let $v_1=\allone$, $v_2=[0]$,
$v_3=\overline\alpha e_r+\alpha e_{3r}$, $v_4=e_{2r}$,  
$v_5=\alpha e_r+\overline\alpha e_{3r}$.
Using the relations $\alpha^2=-\frac\eta2$, $\overline\alpha^2=\frac\eta2$ and
$\alpha\overline\alpha=\frac12$, and Lemma~\ref{berndt} it is easy to check
that indeed the $v_i$ form a basis of $M_0$ and that the matrix of $\mu_{K^*}$ 
on $M_0$ in this new basis is the matrix $K_0$. We have  thus established the similarity of $K_0$ and $K_0^*$.

\begin{lemma}\label{eigenvalues}
\begin{enumerate}
\item[(i)] For $1\leq i\leq \frac{q-1}{4}$ the eigenvalues of each $2\times 2$ block of $K_i$ are $p$ and $-p$.
\item[(ii)] For $1\leq i\leq q-2$ and $i\notin\{r,2r,3r\}$ we 
have $J(i,2r)J(i+2r,2r)=p^2$.
\item[(iii)] The eigenvalues of $K_i^*$ are $p$ and $-p$, each with multiplicity 2.
\end{enumerate}
\end{lemma}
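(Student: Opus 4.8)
The plan is to handle the three parts in increasing order of difficulty, reducing everything to the Gauss--Jacobi relation $J(\chi,\psi)=g(\chi)g(\psi)/g(\chi\psi)$ (valid when $\chi\psi$ is nontrivial), where $g$ denotes the Gauss sum, together with Lemma~\ref{berndt} and the identities $\alpha\overline\alpha=\tfrac12$, $\alpha^2=-\tfrac\eta2$, $\overline\alpha^2=\tfrac\eta2$, $\eta^2=-1$. Throughout I read $J(i,j)=J(T^{-i},T^{-j})$ and write $\phi=T^{2r}$, which is the quadratic character since $2r=\tfrac{q-1}{2}$.

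For part (ii) I would use $T^{-2r}=\phi$ to rewrite $J(i,2r)=J(T^{-i},\phi)$ and $J(i+2r,2r)=J(T^{-i}\phi,\phi)$. Multiplying the two Gauss-sum expressions telescopes: the factors $g(T^{-i})$ and $g(T^{-i}\phi)$ cancel, leaving $g(\phi)^2=\phi(-1)q=q=p^2$, where $\phi(-1)=1$ because $q\equiv1\pmod 4$. Part (i) is then immediate: each $2\times2$ block of $K_i$ in \eqref{Ki} is off-diagonal, so its eigenvalues are the two square roots of the product of its off-diagonal entries; for the top block this product is $J(i,2r)J(i+2r,2r)$ and for the bottom block it is $J(i+r,2r)J(i+3r,2r)$, each equal to $p^2$ by part (ii) (applied at $i$ and at $i+r$ respectively), with any boundary index handled directly by Lemma~\ref{berndt}(i). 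Thus each block has eigenvalues $\pm p$.

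For part (iii) I would exploit the block anti-diagonal shape of $K_i^*$ in \eqref{Kistar}, writing $K_i^*=\left[\begin{smallmatrix}0&P\\Q&0\end{smallmatrix}\right]$ with $P,Q$ the displayed $2\times2$ corner blocks, and aim at the single clean identity $(K_i^*)^2=p^2 I_4$, i.e. $PQ=QP=p^2 I_2$. After using $\alpha\overline\alpha=\tfrac12$, each diagonal entry of $PQ$ and $QP$ is an average of products such as $J(i,r)J(i+r,3r)$; each such product telescopes through the Gauss--Jacobi relation to $g(T^{-r})g(T^{r})=T^r(-1)q=p^2$ (using $T^r(-1)=1$), so every diagonal entry equals $\tfrac12(p^2+p^2)=p^2$. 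After using $\alpha^2=-\tfrac\eta2$ and $\overline\alpha^2=\tfrac\eta2$, each off-diagonal entry is a scalar multiple of a difference such as $J(i+2r,r)J(i+3r,r)-J(i+r,3r)J(i+2r,3r)$; these vanish by Lemma~\ref{berndt}(ii) applied at the four shifts $i,i+r,i+2r,i+3r$ (each shift lands in the admissible range and avoids $\{0,r,2r,3r\}$). Hence $(K_i^*)^2=p^2 I_4$, and since $K_i^*$ has zero trace its eigenvalues must be $p$ and $-p$ with equal multiplicity $2$.

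The main obstacle is the bookkeeping in part (iii): one must match each of the four vanishing off-diagonal expressions to the correct shifted instance of Lemma~\ref{berndt}(ii), and keep the four telescoping diagonal products straight, all while tracking the powers of $\eta$ and the $\alpha,\overline\alpha$ coefficients. Once the dictionary between the entries of \eqref{Kistar} and the Jacobi-sum relations is fixed, the computation is routine; the conceptual point is simply that $(K_i^*)^2$ is scalar, which sidesteps any need to compute a characteristic polynomial directly and makes the matching of eigenvalues with those of $K_i$ from part (i) transparent.
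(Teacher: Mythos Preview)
Your proof is correct, but it takes a genuinely different route from the paper's. The paper argues globally: since $K=2A(q)+I$ and $K^*=2A^*(q)+I$ have the known spectrum $\{q,\pm p\}$ (from the strongly regular graph parameters), and since each $M_i$ with $i\neq 0$ is an invariant subspace not containing $\allone$ on which the restriction has trace $0$, the eigenvalues on $M_i$ must be $p$ and $-p$ with equal multiplicities; this gives (i) and (iii) at once, and (ii) is then read off from (i) via the characteristic polynomial of a $2\times 2$ block of \eqref{Ki}.

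By contrast, you work locally with Gauss and Jacobi sums: you prove (ii) first by telescoping $J(T^{-i},\phi)J(T^{-i}\phi,\phi)=g(\phi)^2=q$, deduce (i) from (ii), and for (iii) establish the stronger identity $(K_i^*)^2=p^2 I_4$ by checking $PQ=QP=p^2 I_2$ directly, using the telescoping $J(j,r)J(j+r,3r)=g(T^{-r})g(T^{r})=q$ for the diagonal entries and four shifted instances of Lemma~\ref{berndt}(ii) (at $j=i,\,i+r,\,i+2r,\,i+3r$) for the off-diagonal vanishing. Your approach is more computational but entirely self-contained (it never invokes the graph spectrum), and it actually yields the extra information that $(K_i^*)^2$ is scalar. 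The paper's approach is shorter and conceptually cleaner, trading the Jacobi-sum bookkeeping for one sentence of spectral reasoning.
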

\begin{proof}
The eigenvalues of $K$ and $K^*$ on $R^{\Fq}$ are $p^2$, with multiplicity $1$
and eigenvector $\allone$, and $p$ and $-p$, with equal multiplicity.
It follows that on any invariant subspace not containing $\allone$ on which
$K$ (respectively $K^*$) has trace $0$, the eigenvalues are $p$ and $-p$
with equal multiplicity, so  (i) and (iii) hold. Then (ii) follows
from (i) and (\ref{Ki}).
\end{proof}

\begin{lemma} Let $1\leq i\leq \frac{q-1}{4}$. Then $K_i$
is similar to the matrix in the follwing list which has the same $p$-rank
as $K_i$.
\begin{equation}\label{canon}
\begin{bmatrix}
0&p&0&0\\
p&0&0&0\\
0&0&0&p\\
0&0&p&0
\end{bmatrix},
\begin{bmatrix}
0&p^2&0&0\\
1&0&0&0\\
0&0&0&p\\
0&0&p&0
\end{bmatrix},
\begin{bmatrix}
0&p^2&0&0\\
1&0&0&0\\
0&0&0&p^2\\
0&0&1&0
\end{bmatrix}.
\end{equation}
\end{lemma}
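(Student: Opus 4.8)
The plan is to exploit the fact that $K_i$ is already block-diagonal, consisting of the two $2\times 2$ anti-diagonal blocks $\begin{bmatrix}0&J(i+2r,2r)\\J(i,2r)&0\end{bmatrix}$ and $\begin{bmatrix}0&J(i+3r,2r)\\J(i+r,2r)&0\end{bmatrix}$, and to put each block separately into normal form by an explicit conjugation over $R$. By Lemma~\ref{eigenvalues}(ii), applied first to the index $i$ and then to the index $i+r$, the product of the two off-diagonal entries of each block equals $p^2$ exactly. Since Jacobi sums are algebraic integers their $p$-adic valuations are nonnegative, and as these valuations sum to $2$ in each block, the pair of valuations can only be $(1,1)$, $(0,2)$, or $(2,0)$.

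Next I would normalize a single block $\begin{bmatrix}0&b\\a&0\end{bmatrix}$ with $ab=p^2$. Conjugating by $\diag(u,1)$, which is invertible over $R$ for any $u\in R^\times$, replaces the block by $\begin{bmatrix}0&ub\\u^{-1}a&0\end{bmatrix}$. If the valuations are $(1,1)$, write $a=pu_a$, $b=pu_b$ with $u_a,u_b\in R^\times$; then $u_au_b=1$, and taking $u=u_a$ yields $\begin{bmatrix}0&p\\p&0\end{bmatrix}$. If $a$ is a unit and $v_p(b)=2$, taking $u=a$ yields the companion matrix $\begin{bmatrix}0&p^2\\1&0\end{bmatrix}$; the remaining case $v_p(a)=2$, $b$ a unit reduces to this one after conjugating by the swap $\begin{bmatrix}0&1\\1&0\end{bmatrix}$, which interchanges $a$ and $b$. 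Thus every block is $R$-similar either to $\begin{bmatrix}0&p\\p&0\end{bmatrix}$ (valuations $(1,1)$) or to $\begin{bmatrix}0&p^2\\1&0\end{bmatrix}$ (one entry a unit).

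Assembling the two normalized blocks as a direct sum—reordering them if necessary by a permutation similarity—$K_i$ is $R$-similar to exactly one of the three matrices in (\ref{canon}), according to whether zero, one, or two of its blocks contain a unit entry. Finally I would observe that a block of the first type reduces to $0$ modulo $p$ and so contributes $0$ to the $p$-rank, while a block of the second type has $p$-rank $1$; hence the three listed matrices have $p$-ranks $0$, $1$, $2$ respectively, and the number of blocks of $K_i$ with a unit entry is exactly $\rank_p K_i$. Since conjugation over $R$ descends to $R/pR\cong\Fq$, the $p$-rank is a similarity invariant, so the matrix of (\ref{canon}) that $K_i$ meets is precisely the one sharing its $p$-rank.

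The computations themselves are routine; the only point requiring care—and the conceptual heart of the argument—is that the conjugating matrices must be invertible over the local ring $R$, not merely over its fraction field. This is what promotes the coincidence of characteristic polynomials (each block has $x^2-p^2$) into genuine $R$-similarity, and it is exactly why the valuation pattern intervenes: a block is diagonalizable over $R$, hence $R$-similar to $\diag(p,-p)\sim\begin{bmatrix}0&p\\p&0\end{bmatrix}$, precisely when both off-diagonal valuations equal $1$, and is otherwise a cyclic $R[x]/(x^2-p^2)$-module with companion normal form. Confirming that no invariant finer than the $p$-rank is needed thus rests entirely on the exact identity $J(i,2r)J(i+2r,2r)=p^2$ of Lemma~\ref{eigenvalues}.
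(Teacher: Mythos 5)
Your proof is correct and follows essentially the same route as the paper: the paper also rescales the second basis vector of each anti-diagonal $2\times 2$ block by the unit $p^{-c(i,2r)}J(i,2r)$ (resp.\ $p^{-c(i+r,2r)}J(i+r,2r)$), which is exactly your conjugation by $\diag(u,1)$, and likewise rests entirely on the identity $J(i,2r)J(i+2r,2r)=p^2$ from Lemma~\ref{eigenvalues}(ii). Your write-up is merely more explicit about the case analysis of the valuation pairs and the final matching by $p$-rank.
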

\begin{proof}
We choose a new basis $v_1=e_i$, $v_2=p^{-c(i,2r)}J(i,2r)e_{i+2r}$,  $v_3=e_{i+r}$, $v_4=p^{-c(i+r,2r)}J(i+r,2r)e_{i+3r}$. Then, by Lemma~\ref{eigenvalues}(ii),
the matrix of $\mu_K$ on $M_i$ is the matrix in  (\ref{canon}) that has the same $p$-rank
as $K_i$.
\end{proof}

For $1\leq i\leq \frac{q-1}{4}$, let $J(i)=\{i,i+r,i+2r,i+3r\}$
Then for $j\in J(i)$, the vectors $e_j$, $e_{j+r}$, 
$e_{j+2r}$, $e_{j+r}$ are just the vectors $e_i$, $e_{i+r}$,
$e_{i+2r}$, $e_{i+2r}$ is a different order, so the matrix, which we shall call
$K^*_j$ of $\mu_{K^*}$ on $M_i$  with respect to the first ordered basis
is similar to $K^*_i$ (by a permutation matrix). 
This gives us the flexibility to talk about $K^*_j$ for any
$j$ with $1\leq j\leq q-2$, $j\notin\{r,2r,3r\}$. We will show 
for each $i$ with $1\leq i\leq \frac{q-1}{4}$, that $K_j^*$ is
similar over $R$ to $K_i$ for some $j\in J(i)$.

We consider the matrix of $p$-adic valuations of the entries in $K_i^*$:

\begin{equation}\label{vistar}
\begin{bmatrix}
0&0&c(i+r,3r)&c(i+3r,r)\\
0&0&c(i+r,r)&c(i+3r,3r)\\
c(i,r)&c(i+2r,3r)&0&0\\
c(i,3r)&c(i+2r,r)&0&0\\
\end{bmatrix}.
\end{equation}

From the definition of $c(i,j)$ and the fact that $s(r)=s(3r)=(p-1)$ can be written in 
the form
\begin{equation}\label{abcdD}
\begin{bmatrix}
0&0&d+D&b+D\\
0&0&c+D&a+D\\
a&b&0&0\\
c&d&0&0\\
\end{bmatrix},
\end{equation}
where $D=\frac{1}{p-1}(s(i)-s(i+r)+s(i+2r)-s(i+3r))$ and 
the entries $a$, $b$, $c$, $d$, $a+D$, $b+D$, $c+D$, $d+D$ lie in the set $\{0,1,2\}$ .

\begin{lemma}\label{abcd}
\begin{enumerate}
\item[(i)] $a+d=b+c$.
\item[(ii)] $a+d+b+c+2D=4$, so $a+d+D=2$.
\item[(iii)] $D\in\{-1,0,1\}$. Hence $a+d>0$.
\end{enumerate}
\end{lemma}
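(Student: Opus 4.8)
The plan is to obtain parts (i) and (ii) directly from Lemma~\ref{carries}, and to concentrate the real work on part (iii). Throughout I use the identification of the entries of (\ref{vistar}) with those of (\ref{abcdD}) recorded just above the lemma, namely $a=c(i,r)$, $b=c(i+2r,3r)$, $c=c(i,3r)$, $d=c(i+2r,r)$, and, for the shifted entries, $d+D=c(i+r,3r)$ and $a+D=c(i+3r,3r)$.

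Under this identification, (i) is nothing but Lemma~\ref{carries}(iii), which reads $c(i,r)+c(i+2r,r)=c(i,3r)+c(i+2r,3r)$, i.e. $a+d=b+c$. For (ii), I substitute into Lemma~\ref{carries}(ii), which for $t=1$ gives $c(i,r)+c(i+r,3r)+c(i+2r,r)+c(i+3r,3r)=4$, that is $a+(d+D)+d+(a+D)=4$, whence $a+d+D=2$; combined with (i) this yields $a+b+c+d+2D=2(a+d)+2D=2(a+d+D)=4$. Note that deriving $a+d+D=2$ from the matrix form (\ref{abcdD}) in this way is insensitive to the explicit formula for $D$, which is convenient.

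The crux is (iii). Using (ii) to write $D=2-a-d$, the claim $D\in\{-1,0,1\}$ is equivalent to $1\le a+d\le 3$, and since $a,d\in\{0,1,2\}$ the only configurations to rule out are $a=d=0$ and $a=d=2$. I would reduce both to the single implication: for every admissible index $j$, if $c(j,r)=0$ then $c(j+2r,r)\ge 1$. To prove this implication I work with the two base-$p$ digits of $j$ (recall $q=p^2$) and the explicit expansions $r=(\tfrac{3p-1}{4},\tfrac{p-3}{4})$ and $2r=(\tfrac{p-1}{2},\tfrac{p-1}{2})$. Since the $p^0$-digit of $r$ is $\tfrac{3p-1}{4}$, the absence of a carry in $j+r$ forces the $p^0$-digit of $j$ to be $<p-\tfrac{3p-1}{4}=\tfrac{p+1}{4}$, hence at most $\tfrac{p-3}{4}$. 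Adding $2r$, whose $p^0$-digit is $\tfrac{p-1}{2}$, then pushes the $p^0$-digit of $j+2r$ up to at least $\tfrac{p-1}{2}>\tfrac{p-3}{4}$, so the sum $(j+2r)+r$ cannot be carry-free and $c(j+2r,r)\ge 1$. Applying this implication to $j=i$ rules out $a=d=0$, giving $a+d\ge 1$ (equivalently $a+d>0$). To rule out $a=d=2$ I dualize through Lemma~\ref{carries}(i): since $c(j,r)+c(q-1-j,r)=2$ and $q-1-(i+2r)\equiv(q-1-i)+2r\pmod{q-1}$, writing $i'=q-1-i$ gives $c(i',r)=2-a$ and $c(i'+2r,r)=2-d$; applying the implication to $j=i'$ shows $2-a$ and $2-d$ cannot both vanish, i.e. $a$ and $d$ are not both $2$, so $a+d\le 3$.

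I expect the only delicate point to be the carry bookkeeping in the implication above, in particular the end-around carry that arises when the addition is reduced modulo $q-1=p^2-1$; once the two-digit analysis is handled carefully, everything else is a direct appeal to Lemma~\ref{carries}.
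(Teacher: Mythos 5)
Your proof is correct and follows essentially the same route as the paper: parts (i) and (ii) are read off from Lemma~\ref{carries}, and part (iii) is reduced to ruling out the extremal cases $a+d\in\{0,4\}$ by a base-$p$ carry analysis of the digits of $r$ and $2r$. The only (harmless) differences are that you need just the $p^0$-digit and the single hypothesis $c(j,r)=0$ to force $c(j+2r,r)\geq 1$ (the paper also invokes $c(i,3r)=0$ and analyzes both digits), and you dispose of the second extremal case by the complementation $i\mapsto q-1-i$ via Lemma~\ref{carries}(i), where the paper instead replaces $i$ by $i+r$ to swap the blocks and negate $D$.
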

\begin{proof} Parts (i) and (ii) are special cases of parts (ii) and (iii)
of Lemma~\ref{carries}. By (ii) we know $\abs{D}\leq 2$. 
If $D=2$, then by (ii), we must have $a=b=c=d=0$.
If $D=-2$, then by (ii), we must have $a=b=c=d=2$ and the upper right matrix
is zero. So, by replacing $i$ by $i+r$ if necessary, we can assume
that $D=2$ and that $a=b=c=d=0$, in order to reach a contradiction.
Now $a=c(i,r)$ and $c=c(i,3r)$. Let $i=(i_0,i_1)$ and recall that
$r=(\frac{3p-1}{4},\frac{p-3}{4})$ and $r=(\frac{p-3}{4},\frac{3p-1}{4})$.
In order for $a=c=0$, we must have $0\leq i_0$,$i_1\leq \frac{p-3}{4}$. But
then if $i+2r=(j_0,j_1)$, we have $\frac{p-1}{2}\leq j_0$, $j_1$, which forces
$c(i+2r,r)>0$, contrary to the assumption that $d=0$. The final
assertion is clear. 
\end{proof}

\begin{lemma} $K_i^*$ and $K_i$ have the same $p$-rank, for $1\leq i\leq \frac{q-1}{4}$.
\end{lemma}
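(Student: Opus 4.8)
The plan is to compute each $p$-rank separately as a sum of the $p$-ranks of the two $2\times 2$ blocks that appear (after reordering) in $K_i$ and in $K_i^*$, to express both answers in terms of the carry-values $c(\cdot,\cdot)$, and then to match them through a short arithmetic identity together with Lemma~\ref{abcd}. Since $q=p^2$ we have $t=1$, so every $c(\cdot,\cdot)$ lies in $\{0,1,2\}$. From the block form (\ref{Ki}), $K_i$ is the direct sum of two anti-diagonal blocks with off-diagonal entries $J(i,2r),J(i+2r,2r)$ and $J(i+r,2r),J(i+3r,2r)$; by Lemma~\ref{eigenvalues}(ii) each pair of entries has valuations summing to $2$, so each block reduces modulo $p$ to rank $1$ exactly when one of its entries is a unit, i.e. when the corresponding $c(\cdot,2r)$ vanishes, and to rank $0$ otherwise. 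Hence $\rank_p K_i=\#\{\,j\in J(i): c(j,2r)=0\,\}$.

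For $K_i^*$ I would use that it is anti-block-diagonal, so its $p$-rank is the sum of the ranks of its lower-left and upper-right $2\times 2$ blocks. Because $\alpha$ and $\overline\alpha$ are units, these ranks are governed entirely by the valuation arrays in (\ref{vistar}), rewritten as (\ref{abcdD}). The point I would stress is that in each block both products entering the determinant carry the same valuation, namely $a+d=b+c=2-D$ for the lower-left block and $a+d+2D=2+D$ for the upper-right block by Lemma~\ref{abcd}(i),(ii); since $D\in\{-1,0,1\}$, both of these are at least $1$. Consequently each block is singular modulo $p$ and has $p$-rank at most $1$, equal to $1$ precisely when one of its entries is a unit. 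Writing $m=\min\{c(i,r),c(i,3r),c(i+2r,r),c(i+2r,3r)\}$, this gives that the lower-left block contributes $1$ iff $m=0$ and the upper-right block contributes $1$ iff $m=-D$.

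The bridge between the two computations is the carry identity $c(j,2r)=c(j,r)+c(j+r,r)-c(r,r)=c(j,r)+c(j+r,r)-1$, which is immediate from $c(x,y)=\frac{1}{p-1}(s(x)+s(y)-s(x+y))$ and $c(r,r)=1$. Applying it to the four indices $j\in J(i)$ rewrites each $c(j,2r)$ in terms of $a,b,c,d$ and $D$, turning the equality $\rank_p K_i=\rank_p K_i^*$ into a finite comparison. I would first reduce to $D\in\{0,1\}$: replacing $i$ by $i+r$ merely permutes the basis of $M_i$, hence replaces $K_i,K_i^*$ by permutation-similar matrices while sending $D$ to $-D$, so the case $D=-1$ follows from $D=1$. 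For $D=1$ one has $a+d=1$, so $m=0$ and both sides equal $1$; for $D=0$ both sides equal $2$ when $m=0$ and $0$ otherwise.

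The one place that needs care, and what I expect to be the only real obstacle, is realizability: not every formal tuple $(a,b,c,d,D)$ with $a+d=b+c=2-D$ actually occurs, because the derived quantities $c(j,2r)=c(j,r)+c(j+r,r)-1$ must again lie in $\{0,1,2\}$. This constraint, equivalently that no sum of carries exceeds $2t=2$, is exactly what eliminates the tuples that would otherwise break the match, such as $(a,b,c,d)=(0,0,2,2)$ at $D=0$. Thus the substance of the argument is not any hard estimate but the verification that the range restriction on carries forces the admissible tuples in the $D=0$ case to satisfy that $c(j,2r)=0$ for exactly two $j\in J(i)$ when $m=0$ and for no $j$ otherwise, matching the block count for $K_i^*$; this is a bounded check using only Lemma~\ref{carries} and Lemma~\ref{abcd}.
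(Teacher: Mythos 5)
Your argument is correct, but it proceeds quite differently from the paper's. The paper also begins by observing that each anti-diagonal $2\times 2$ block of $K_i^*$ is singular modulo $p$ (via Lemma~\ref{abcd}(iii)), but it then treats the two extreme cases by direct base-$p$ digit analysis: for $\rank_p(K_i)=2$ it normalizes to $c(i,2r)=c(i+r,2r)=0$ and tracks which digit positions generate carries to conclude $c(i,3r)=c(i+r,r)=0$; for $\rank_p(K_i)=0$ it uses $\det(K_i)=p^4$ to force all $c(j,2r)=1$, hence $s(i)=s(i+r)=s(i+2r)=s(i+3r)$, hence all entries of (\ref{vistar}) equal to $1$. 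You instead derive closed formulas for both ranks in the parameters $(a,b,c,d,D)$ of (\ref{abcdD}) — namely $\rank_p K_i=\#\{j\in J(i):c(j,2r)=0\}$ and $\rank_p K_i^*=[m=0]+[m+D=0]$ with $m=\min\{a,b,c,d\}$ — and splice them together with the carry identity $c(j,2r)=c(j,r)+c(j+r,r)-c(r,r)$, reducing the lemma to a finite check in which the only nontrivial point is that the constraint $c(j,2r)\in\{0,1,2\}$ kills the offending tuples (your example $(a,b,c,d)=(0,0,2,2)$ at $D=0$ is exactly the right one, and the check does go through; the $D=1$ assertion that $\rank_p K_i=1$ also needs its own half-line of verification from $\{a,d\}=\{b,c\}=\{0,1\}$). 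What your route buys is uniformity: all three rank values are handled by one mechanism, whereas the paper's written argument only establishes the implications for ranks $2$ and $0$ and leaves the rank-$1$ case to be inferred; it also isolates the realizability constraint as the genuine content. What the paper's route buys is that the digit-level analysis in the rank-$2$ case yields explicit information about which $i$ occur, which feeds naturally into the subsequent basis constructions, and it avoids any enumeration of parameter tuples.
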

\begin{proof}
We have already seen that the $\rank_p (K_i)\in\{0, 1, 2\}$.
To see that $\rank_p (K_i^*)\in\{0, 1, 2\}$, we simply note that 
each of the  anti-diagonal $2\times 2$ blocks in $K_i^*$ must be
singular modulo $p$, by Lemma~\ref{abcd}(iii).
We will prove that $\rank_p(K_i^*)=2$   if  $\rank_p(K_i)=2$ 
and  $\rank_p(K_i^*)=0$   if   $\rank_p(K_i)=0$.
Suppose $\rank_p(K_i)=2$. Then, by replacing $i$ by some $j\in J(i)$ if
necessry, we can assume that $c(i,2r)=0=c(i+r,2r)$. We shall show that
$c(i,3r)=0$ and $c(i+r,r)=0$. Since these entries occur in different
anti-diagonal $2\times 2$ blocks, this will force $\rank_p(K_i^*)=2$.
Let $i=(i_0,i_1)$ and $i+r=(j_0,j_1)$. The hypotheses mean that 
$i_0$, $i_1$, $j_0$, $j_1\leq\frac{p-1}{2}$. Since $\frac{3p-1}{4}\geq \frac{p-1}{2}\geq j_0$, when $r=(\frac{3p-1}{4},\frac{p-3}{4})$ is added to $i$ a carry must be generated 
by the addition of the first digits. This implies that
$i_0>\frac{p-3}{4}$. Since $\frac{p-3}{4}+1+\frac{p-1}{2}\leq p-1$,
no carry is generated from the addition of the second digits of $r$ and $i$.
Thus, $j_1=i_1+\frac{p-3}{4}+1$. Since $j_1\leq\frac{p-1}{2}$, we deduce that 
$i_1\leq\frac{p+1}{4}-1=\frac{p-3}{4}$. It is then easily checked that
$c(i,3r)=0$.  Also, since $j_0+p=i_0+\frac{3p-1}{4}$, we have $j_0\leq\frac{p-3}{4}$,
from which it follows that $c(i+r,r)=0$. We have proved that $\rank_p(K_i^*)=2$
if $\rank_p(K_i)=2$. 
Next suppose that $\rank_p(K_i)=0$. Since $\det(K_i)=p^4$, we must have
$c(i,2r)=c(i+r,2r)=c(i+2r,2r)=c(i+3r,2r)=1$. Since $s(2r)=p-1$,
we deduce from the formula $c(u,v)=\frac{1}{p-1}(s(u)+s(v)-s(u+v))$
that $s(i)=s(i+r)=s(i+2r)=s(i+3r)$. It then follows that all of the
nonzero entries of (\ref{vistar}) are equal to 1, so that $\rank_p(K_i^*)=0$.
\end{proof}

We are now ready to complete the proof of similarity of $K_i$ and $K_i^*$.
We have seen that $\rank_p(K_i^*)=\rank_p(K_i)\in\{0,1,2\}$. For each
$p$-rank, we exhibit a basis of $M_i$ for which the matrix
of the restriction of $\mu_{K^*}$ is the corresponding matrix in (\ref{canon}).

Suppose $\rank_p(K_i^*)=0$. Then all nonzero entries of (\ref{Kistar})
are exactly divisible by $p$.
We set $v_i=e_i$ $v_2=\frac{1}{p}(\overline\alpha J(i,r)e_{i+r}+\alpha J(i,3r)e_{i+3r})$,
$v_3=e_{i+2r}$, and  $v_4=\frac{1}{p}(\alpha J(i+2r,3r)e_{i+r}+\overline\alpha J(i+2r,r)e_{i+3r})$. It is easy to check that $v_1$, $v_2$, $v_3$ and $v_4$ form
a basis of $M_i$.
Then $\mu_ {K^*}(v_1)=pv_2$ and 
\begin{equation}
\begin{aligned}
\mu_{K^*}(v_2)&=\frac{1}{p}[\overline\alpha J(i,r)\mu_{K^*}(e_{i+r})+\alpha J(i,3r)\mu_{K^*}(e_{i+3r})]\\
&=\frac{1}{p}[\overline\alpha J(i,r)(\alpha J(i+r,3r)e_i+\overline\alpha J(i+r,r)e_{i+2r})\\
&+\alpha J(i,3r)(\overline\alpha J(i+3r,r)e_i+\alpha J(i+3r,3r)e_{i+2r})]\\
&=\frac{1}{p}\overline\alpha\alpha[J(i,r)J(i+r,3r)+J(i,3r)J(i+3r,r))]e_i\\
&+\frac{1}{p}[\overline\alpha^2J(i,r)J(i+r,r)+\alpha^2J(i,3r)J(i+3r,3r)]e_{i+2r}
\end{aligned}
\end{equation}
By Lemma~\ref{berndt}(ii), and the fact that $\alpha^2+\overline\alpha^2=0$,
we see that the coefficient of $e_{i+2r}$ is zero. Thus $v_1$ and $v_2$
span a $\mu_{K^*}$-invariant subspace on which $\mu_{K^*}$ has trace zero,
hence determinant $-p^2$. It follows that the coefficient of $v_1$
must be $p$. The same calculation with $i$ replaced by $i+2r$
shows that $\mu_{K^*}(v_3)=pv_4$ and $\mu_{K^*}(v_4)=pv_3$, so with respect to the
basis $v_1$, $v_2$, $v_3$, $v_4$, the matrix of $\mu_{K^*}$ on $M_i$
is the first matrix in (\ref{canon}).

Suppose $\rank_p(K_i^*)=1$. Then,  $D\neq 0$ as otherwise the $p$-rank would be even.
Up to replacing $i$ by some $j\in J(i)$, we can assume $D=1$. Then by a
further change of $i$ with $i+2r$ if nececessary 
 we can assume that the matrix (\ref{vistar}) of valuations is
\begin{equation}\label{rank1}
\begin{bmatrix}
0&0&2&2\\
0&0&1&1\\
0&1&0&0\\
0&1&0&0\\
\end{bmatrix},
\end{equation}
We set $v_i=e_i$ $v_2=\overline\alpha J(i,r)e_{i+r}+\alpha J(i,3r)e_{i+3r}$,
$v_3=e_{i+2r}$, and  $v_4=\frac{1}{p}(\alpha J(i+2r,3r)e_{i+r}+\overline\alpha J(i+2r,r)e_{i+3r})$. It is straightforward to check that $v_1$, $v_2$, $v_3$ and $v_4$ form a 
basis for $M_i$ and  a similar calculation to the $p$-rank $0$ case shows that on $M_i$
the matrix of $\mu_{K^*}$ with respect to this basis is the second matrix of
(\ref{canon}).

Suppose $\rank_p(K_i^*)=2$. Then, we must have $D=0$, since
neither anti-diagonal block can have $p$-rank $0$.
Up to replacing $i$ by $j\in J(i)$, we can assume that in (\ref{vistar}) 
$c(i,3r)=c(i+3r,r)=0$, whence $c(i+2r,3r)=c(i+r,r)=2$, by Lemma~\ref{abcd}(ii).
We claim that $c(i,r)=c(i+2r,r)=1$. Suppose not. Then one of $c(i,r)$
and $c(i+2r,r)$ is zero and the other is $2$. If $c(i,r)=0$ and  $c(i+2r,r)=2$, let
$i=(i_0,i_1)$. Since also $c(i,3r)=0$, we must have $i_0$, $i_1\leq\frac{p-3}{4}$.
Then $i+2r=(i_0+\frac{p-1}{2}, i_1+\frac{p-1}{2})$, with
$i_1+\frac{p-1}{2}\leq\frac{3p-5}{4}=\frac{3p-1}{4}-1$. This means that in adding $r$ to
$i+2r$, there can be no carry generated in the second digit, which contradicts
the assumption that $c(i+2r,r)=2$. If $c(i,r)=2$ and  $c(i+2r,r)=0$, 
we obtain a contradiction similarly.  Thus, we may assume that the matrix
of valuations (\ref{vistar}) is

\begin{equation}\label{rank2}
\begin{bmatrix}
0&0&1&2\\
0&0&0&1\\
1&2&0&0\\
0&1&0&0\\
\end{bmatrix},
\end{equation}

We set $v_i=e_i$ $v_2=\overline\alpha J(i,r)e_{i+r}+\alpha J(i,3r)e_{i+3r}$,
$v_3=e_{i+2r}$, and  $v_4=\alpha J(i+2r,3r)e_{i+r}+\overline\alpha J(i+2r,r)e_{i+3r}$,
and easily check that these vectors form a basis of $M_i$. Then a similar calculation
to the $p$-rank $0$ case, shows that the matrix of $\mu_{K^*}$ with respect to this basis is the third matrix of (\ref{canon}).

The proof that $K_i$ and $K_i^*$ are similar over $R$ is now complete.

\begin{remark} For $p>3$ we do not know when, if ever, $A(p^2)$ and $A^*(p^2)$ are 
similar over $\Z$. 
\end{remark}


\begin{thebibliography}{10}
\bibitem{Bai}
Hua Bai, \emph{On the critical group of the {$n$}-cube}, Linear Algebra Appl.
  \textbf{369} (2003), 251--261.


\bibitem{Berndt-Evans}
Bruce~C. Berndt and Ronald~J. Evans, \emph{Sums of gauss, eisenstein, jacobi,
  jacobsthal, and brewer}, Illinois J. Math. \textbf{23} (1979), no.~3,
  374--437.

\bibitem{Berndt-Evans-Williams}
Bruce~C. Berndt, Ronald~J. Evans, and Kenneth~S. Williams, \emph{Gauss and
  {J}acobi sums}, Canadian Mathematical Society Series of Monographs and
  Advanced Texts, John Wiley \& Sons Inc., New York, 1998, A Wiley-Interscience
  Publication. \MR{1625181 (99d:11092)}


\bibitem{BH}
A.~E. Brouwer and W.~H. Haemers, \emph{Spectra of graphs}, Universitext,
  Springer, New York, 2012. \MR{2882891}

\bibitem{CSXPaley}
David~B. Chandler, Peter Sin, and Qing Xiang, \emph{The {S}mith and critical
  groups of {P}aley graphs}, J. Algebraic Combin. \textbf{41} (2015), no.~4,
  1013--1022.

\bibitem{Jacobson} B. Jacobson, Critical groups of graphs, Honors Thesis, University of Minnesota, 2003; http://www.math.umn.edu/∼reiner/HonorsTheses/Jacobson thesis.pdf.

\bibitem{Dade}
E.~C. Dade, \emph{Algebraic integral representations by arbitrary forms},
  Mathematika \textbf{10} (1963), 96--100.

\bibitem{DH}
D.~Dhar, \emph{Self-organized critical state of sandpile automaton models},
 Phys. Rev. Lett. \textbf{64} (1990), no.~14, 1613--1616.

\bibitem{Ducey-Jalil}
Joshua~E. Ducey and Deelan~M. Jalil, \emph{Integer invariants of abelian
  {C}ayley graphs}, Linear Algebra Appl. \textbf{445} (2014), 316--325.

\bibitem{dwork}
B.~Dwork, \emph{On the rationality of the zeta function of an algebraic
  variety}, Amer. J. Math. \textbf{82} (1960), 631--648. \MR{0140494 (25
  \#3914)}

\bibitem{Guralnick}
Robert~M. Guralnick, \emph{A note on the local-global principle for similarity
  of matrices}, Linear Algebra Appl. \textbf{30} (1980), 241--245.

\bibitem{Jacobson-Niedermaier-Reiner}
Brian Jacobson, Andrew Niedermaier, and Victor Reiner, \emph{Critical groups
  for complete multipartite graphs and {C}artesian products of complete
  graphs}, J. Graph Theory \textbf{44} (2003), no.~3, 231--250.


\bibitem{Lorenzini}
D.~Lorenzini, \emph{Smith normal form and {L}aplacians}, J. Combin. Theory Ser.
  B \textbf{98} (2008), no.~6, 1271--1300. \MR{2462319 (2010d:05092)}

\bibitem{McWM}
F.~J. MacWilliams and H.~B. Mann, \emph{On the {$p$}-rank of the design matrix
  of a difference set}, Information and Control \textbf{12} (1968), 474--488.
  \MR{0242696 (39 \#4026)}

\bibitem{Mullin}
Natalie Mullin, \emph{Self-complementary arc-transitive graphs and their
  imposters}, Master thesis, U. Waterloo (2009).

\bibitem{Peisert}
Wojciech Peisert, \emph{All self-complementary symmetric graphs}, J. Algebra
  \textbf{240} (2001), no.~1, 209--229. \MR{1830551 (2002e:05074)}


\bibitem{RZ}
I.~Reiner and H.~Zassenhaus, \emph{Equivalence of representations under
  extensions of local ground rings.}, Illinois J. Math. \textbf{5} (1961),
  409--411.

\bibitem{stick}
L.~Stickelberger, \emph{Ueber eine {V}erallgemeinerung der {K}reistheilung},
  Math. Ann. \textbf{37} (1890), no.~3, 321--367. \MR{1510649}

\bibitem{Taussky}
Olga Taussky, \emph{A {D}iophantine problem arising out of similarity classes
  of integral matrices}, J. Number Theory \textbf{11} (1979), no.~3 S. Chowla
  Anniversary Issue, 472--475.

\bibitem{Vince}
A.~Vince, \emph{Elementary divisors of graphs and matroids}, European J.
  Combin. \textbf{12} (1991), no.~5, 445--453.

\bibitem{WQWX}
Guobiao Weng, Weisheng Qiu, Zeying Wang, and Qing Xiang, \emph{Pseudo-{P}aley
  graphs and skew {H}adamard difference sets from presemifields}, Des. Codes
  Cryptogr. \textbf{44} (2007), no.~1-3, 49--62. \MR{2336393 (2008g:05031)}

\bibitem{Wood}
Melanie~Matchett Wood, \emph{The distribution of sandpile groups of random
  graphs}. arxiv math.PR/1402.5149v2 (2015).

\end{thebibliography}

\end{document}